\newtheorem*{thm*}{Theorem}
\newtheorem{thm}{Theorem}
\newtheorem{lem}[thm]{Lemma}
\newtheorem{pro}[thm]{Proposition}
\newtheorem{cor}[thm]{Corollary}
\newtheorem{conj}[thm]{Conjecture}
\newtheorem{ques}[thm]{Question}
\newcommand{\N}{\mathbb{N}}
\begin{document}

\title{Bounding the List Color Function Threshold from Above}

\author{Hemanshu Kaul$^1$, Akash Kumar$^2$, Andrew Liu$^2$, Jeffrey A. Mudrock$^2$, Patrick Rewers$^2$, \\ Paul Shin$^2$, Michael Scott Tanahara$^2$, and Khue To$^2$}

\footnotetext[1]{Department of Applied Mathematics, Illinois Institute of Technology, Chicago, IL 60616. E-mail: {\tt {kaul@iit.edu}}}

\footnotetext[2]{Department of Mathematics, College of Lake County, Grayslake, IL 60030.  E-mail:  {\tt {jmudrock@clcillinois.edu}}}

\maketitle

\begin{abstract}

The chromatic polynomial of a graph $G$, denoted $P(G,m)$, is equal to the number of proper $m$-colorings of $G$ for each $m \in \N$.  In 1990, Kostochka and Sidorenko introduced the list color function of graph $G$, denoted $P_{\ell}(G,m)$, which is a list analogue of the chromatic polynomial. The \emph{list color function threshold of $G$}, denoted $\tau(G)$, is the smallest $k$ such that $P(G,k) > 0$ and $P_{\ell}(G,m) = P(G,m)$ whenever $m \geq k$.  It is known that for every graph $G$, $\tau(G)$ is finite, and a recent paper of Kaul et al. suggests that complete bipartite graphs may be the key to understanding the extremal behavior of $\tau$. In this paper we develop tools for bounding the list color function threshold of complete bipartite graphs from above.  We show that for any $n \geq 2$, $\tau(K_{2,n}) \leq \lceil (n+2.05)/1.24 \rceil$.  Interestingly, our proof makes use of classical results such as Rolle's Theorem and Descartes' Rule of Signs.

\medskip

\noindent {\bf Keywords.} list coloring, chromatic polynomial, list color function, list color function threshold, enumerative chromatic-choosability

\noindent \textbf{Mathematics Subject Classification.} 05C15, 05C30

\end{abstract}

\section{Introduction}\label{intro}

In this paper all graphs are nonempty, finite, simple graphs.  Generally speaking, we follow West~\cite{W01} for terminology and notation.  We write \emph{AM-GM inequality} for the inequality of arithmetic and geometric means.  The set of natural numbers is $\N = \{1,2,3, \ldots \}$.  For $m \in \N$, we write $[m]$ for the set $\{1, \ldots, m \}$, and when $0 \leq k \leq m$, we write $\binom{[m]}{k}$ for the set of all $k$-element subsets of $[m]$.  We write $K_{l,n}$ for complete bipartite graphs with partite sets of size $l$ and $n$.  If $G$ and $H$ are vertex disjoint graphs, we write $G \vee H$ for the join of $G$ and $H$.  

\subsection{List Coloring and The List Color Function} \label{basic}

In the classical vertex coloring problem, we wish to color the vertices of a graph $G$ with up to $m$ colors from $[m]$ so that adjacent vertices receive different colors, a so-called \emph{proper $m$-coloring}.  The \emph{chromatic number} of a graph, denoted $\chi(G)$, is the smallest $m$ such that $G$ has a proper $m$-coloring.  List coloring is a well-known variation on classical vertex coloring which was introduced independently by Vizing~\cite{V76} and Erd\H{o}s, Rubin, and Taylor~\cite{ET79} in the 1970s.  For list coloring, we associate a \emph{list assignment} $L$ with a graph $G$ such that each vertex $v \in V(G)$ is assigned a list of available colors $L(v)$ (we say $L$ is a list assignment for $G$).  We say $G$ is \emph{$L$-colorable} if there is a proper coloring $f$ of $G$ such that $f(v) \in L(v)$ for each $v \in V(G)$ (we refer to $f$ as a \emph{proper $L$-coloring} of $G$).  A list assignment $L$ for $G$ is called a \emph{$k$-assignment} if $|L(v)|=k$ for each $v \in V(G)$.  The \emph{list chromatic number} of a graph $G$, denoted $\chi_\ell(G)$, is the smallest $k$ such that $G$ is $L$-colorable whenever $L$ is a $k$-assignment for $G$.  It is immediately obvious that for any graph $G$, $\chi(G) \leq \chi_\ell(G)$.  Moreover, it is well-known that the gap between the chromatic number and list chromatic number of a graph can be arbitrarily large since $\chi_{\ell}(K_{n,t}) = n+1$ whenever $t \geq n^n$ (see e.g.,~\cite{KM216} for further details).  

In 1912, Birkhoff~\cite{B12} introduced the notion of the chromatic polynomial in order to make progress on the four color problem.  For $m \in \N$, the \emph{chromatic polynomial} of a graph $G$, $P(G,m)$, is the number of proper $m$-colorings of $G$. It is easy to show that $P(G,m)$ is a polynomial in $m$ of degree $|V(G)|$ (e.g., see~\cite{DKT05}). For example, $P(K_n,m) = \prod_{i=0}^{n-1} (m-i)$, $P(C_n,m) = (m-1)^n + (-1)^n (m-1)$, $P(T,m) = m(m-1)^{n-1}$ whenever $T$ is a tree on $n$ vertices, and $P(K_{2,n},m) = m(m-1)^n + m(m-1)(m-2)^n$ (see~\cite{B94} and~\cite{W01}). 

The notion of chromatic polynomial was extended to list coloring in the early 1990s by Kostochka and Sidorenko~\cite{AS90}.  If $L$ is a list assignment for $G$, let $P(G,L)$ denote the number of proper $L$-colorings of $G$. The \emph{list color function} $P_\ell(G,m)$ is the minimum value of $P(G,L)$ where the minimum is taken over all possible $m$-assignments $L$ for $G$.  Since an $m$-assignment could assign the same $m$ colors to every vertex in a graph, it is clear that $P_\ell(G,m) \leq P(G,m)$ for each $m \in \N$.  In general, the list color function can differ significantly from the chromatic polynomial for small values of $m$.  For example, for any $n \in \N$, $P_{\ell}(K_{n,n^n},m) = 0$ whenever $m \in [n]$.  On the other hand,  in 1992, Donner~\cite{D92} showed that for any graph $G$ there is a $k \in \N$ such that $P_\ell(G,m) = P(G,m)$ whenever $m \geq k$. 

\subsection{The List Color Function Threshold} 

With Donner's 1992 result in mind, it is natural to study the point at which the list color function of a graph becomes identical to its chromatic polynomial.  Given any graph $G$, the \emph{list color function number of $G$}, denoted $\nu(G)$, is the smallest $t \geq \chi(G)$ such that $P_{\ell}(G,t) = P(G,t)$.  The \emph{list color function threshold of $G$}, denoted $\tau(G)$, is the smallest $k \geq \chi(G)$ such that $P_{\ell}(G,m) = P(G,m)$ whenever $m \geq k$.  Clearly, $\chi(G) \leq \chi_\ell(G) \leq \nu(G) \leq \tau(G)$.

One of the most famous and important open questions on the list color function asks whether the list color function number of a graph can differ from its list color function threshold. 

\begin{ques} [Kirov and Naimi~\cite{KN16}] \label{ques: threshold}
	For every graph $G$, is it the case that $\nu(G) = \tau(G)$?
\end{ques} 

Much of the research on the list color function has been focused on studying the list color function threshold.  One specific topic of interest is \emph{enumeratively chromatic-choosable} graphs.  A graph $G$ is called \emph{chromatic-choosable} if $\chi_{\ell}(G) = \chi(G)$, and $G$ is said to be \emph{enumeratively chromatic-choosable} if $\tau(G) = \chi(G)$.  The earliest recorded result on the list color function states that chordal graphs are enumeratively chromatic-choosable~\cite{AS90}.  Similarly, cycles are enumeratively chromatic-choosable~\cite{KN16}.  It is also known that for any graph $G$, if $\tau(G) = m$, then $\tau(G \vee K_n) \leq m+n$ (see Proposition~18 in~\cite{KM18}).  This result implies that if $G$ is enumeratively chromatic-choosable, then $G \vee K_n$ is enumeratively chromatic-choosable for any $n \in \N$.  It is challenging to determine which graphs are enumeratively chromatic-choosable and few such graphs are known.

\begin{ques} \label{ques: ecc}
Which graphs are enumeratively chromatic-choosable?
\end{ques}  

Researchers have also studied general upper bounds on the list color function threshold.  In 2009, Thomassen~\cite{T09} showed that for any graph $G$, $\tau(G) \leq |V(G)|^{10} + 1$.  Then, in 2017, Wang, Qian, and Yan~\cite{WQ17} showed that for any graph $G$, $\tau(G) \leq (|E(G)|-1)/\ln(1+ \sqrt{2}) + 1$.  Thomassen also asked the following question in his 2009 paper.

\begin{ques} [Thomassen~\cite{T09}] \label{ques: universal}
	Is there a universal constant $\alpha$ such that for any graph $G$, $\tau(G) - \chi_{\ell}(G) \le \alpha$?
\end{ques}

Question~\ref{ques: universal} went unanswered for 13 years until a very recent breakthrough.  It was shown in~\cite{KK22} that there is a positive constant $C$ such that $\tau(K_{2,n}) - \chi_{\ell}(K_{2,n}) \geq C \sqrt{n}$ whenever $n \geq 16$ (see Theorem~\ref{theorem:firstthresholdpaper} below).  This shows that the answer to Question~\ref{ques: universal} is no in a rather strong sense, and it also shows the significance of studying the list color function of complete bipartite graphs.  The motivation for this paper was to develop techniques for bounding the list color function threshold of complete bipartite graphs from above and to make progress on the following related Conjecture.

\begin{conj} [\cite{KK22}] \label{conj: k2l}
	$\tau(K_{2,n}) = \Theta(\sqrt{n})$ as $n \rightarrow \infty$.
\end{conj}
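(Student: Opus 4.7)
My plan is to separate the two halves of the $\Theta(\sqrt{n})$ claim. For the lower bound $\tau(K_{2,n}) = \Omega(\sqrt{n})$, I would invoke the result of~\cite{KK22} cited above: there is an absolute constant $C>0$ with $\tau(K_{2,n}) - \chi_\ell(K_{2,n}) \geq C\sqrt{n}$ for $n \geq 16$. Since $\chi_\ell(K_{2,n}) \leq 3$ by standard results on list colorings of complete bipartite graphs with a part of size two, the lower bound follows immediately. The real content of the conjecture is therefore the upper bound $\tau(K_{2,n}) = O(\sqrt{n})$, which is substantially stronger than the $O(n)$ bound established in the present paper.

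For the upper bound, let $L$ be an arbitrary $m$-assignment for $K_{2,n}$ with partite sets $\{u_1,u_2\}$ and $\{w_1,\ldots,w_n\}$. The natural starting identity is
\begin{equation*}
P(K_{2,n}, L) = \sum_{a \in L(u_1)} \sum_{b \in L(u_2)} \prod_{i=1}^n |L(w_i) \setminus \{a,b\}|,
\end{equation*}
and the goal is to prove $P(K_{2,n}, L) \geq P(K_{2,n}, m) = m(m-1)^n + m(m-1)(m-2)^n$ whenever $m \geq c\sqrt{n}$. Replacing each factor naively by $m-2$ (or $m-1$ in the diagonal case $a=b$) recovers equality only when $L(u_1)=L(u_2)$ and every $L(w_i)$ contains $L(u_1)$; in general one must exploit the fact that when two lists differ, there exist pairs $(a,b)$ for which many of the factors $|L(w_i)\setminus\{a,b\}|$ take the maximal value $m$, and these surplus contributions compensate for the deficit from pairs whose factors are unusually small. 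I would attempt to formalize this by drawing $(a,b)$ uniformly from $L(u_1) \times L(u_2)$ and bounding the expectation and variance of $\prod_i |L(w_i)\setminus\{a,b\}|$: small variance together with Jensen's inequality (or AM-GM) yields the bound directly, while large variance forces many extremely large products, which also suffices. Balancing these two regimes heuristically produces the threshold $m = \Theta(\sqrt{n})$.

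The main obstacle is that the paper's linear upper bound rests on term-by-term estimates (combined with Rolle's theorem and Descartes' rule of signs to control the polynomial structure of $P(K_{2,n},\cdot)$), and such estimates are intrinsically limited to $O(n)$. Closing the gap to $O(\sqrt{n})$ requires genuinely exploiting cancellations across the $m^2$ pairs $(a,b)$, and in particular ruling out pathological list assignments where the variance of the products is too large to be absorbed by Jensen-type arguments. This is presumably why the conjecture remains open, and making such a concentration-based argument rigorous, probably through a second-moment calculation on the random choice of $(a,b)$ together with a structural dichotomy on $L$, would be the principal technical challenge.
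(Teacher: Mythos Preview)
The statement you were given is a \emph{conjecture}, not a theorem: the paper does not prove it. Indeed, immediately after stating Theorem~\ref{thm:upperboundlinear} the authors write that the theorem ``falls short of verifying Conjecture~\ref{conj: k2l}'' and that a proof ``will require a novel refinement of our approach or an entirely new approach.'' There is therefore no proof in the paper against which to compare your proposal.

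Your write-up is not a proof either, and to your credit you say so: you give a correct argument for the lower bound $\tau(K_{2,n}) = \Omega(\sqrt{n})$ (which is exactly how the paper derives it from~\cite{KK22}), and then sketch a heuristic second-moment/Jensen strategy for the upper bound while explicitly noting that ``making such a concentration-based argument rigorous \ldots\ would be the principal technical challenge.'' That is an honest assessment, but it means the upper bound remains entirely unproven. The specific obstacle you identify is real: the paper's single application of AM--GM over all $m^2$ pairs $(a,b)$ (Lemma~\ref{lemma:oneamgm}) is precisely the kind of term-by-term estimate that cannot do better than linear in $n$, and your proposed variance dichotomy does not, as stated, come with any mechanism to control the ``large variance'' case. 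Until that case is handled, the $O(\sqrt{n})$ upper bound --- and hence the conjecture --- remains open.
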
 

\subsection{Outline of Results}

We now present an outline of the paper.  We begin Section~\ref{one} by presenting some tools that are useful for studying upper bounds on the list color function threshold of complete bipartite graphs.  We begin by proving an important lemma that tells us something about the structure of a list assignment for complete bipartite graphs that has as few proper list colorings as possible.

\begin{lem} \label{lemma:unionbadgeneral} Suppose $l\leq n$, and let $G = K_{l, n}$ with bipartition $\{x_1,\hdots, x_l\}, \{y_1, \hdots, y_n\}$. For any $m \geq \chi_{\ell}(G)$, there exists an $m$-assignment $L$ for $G$, such that $L(y_j) \subseteq \bigcup_{i=1}^l L(x_i)$ for all $j \in [n]$ and $P(G, L) = P_{\ell}(G, m)$.
\end{lem}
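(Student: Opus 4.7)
The plan is to start from any $m$-assignment $L$ that achieves the minimum $P_{\ell}(G,m)$ (such a minimizer exists because, up to relabeling colors, only finitely many $m$-assignments need be considered for a fixed graph), and then to modify the $y$-side lists one color at a time without increasing $P(G,L)$, until $L(y_j)\subseteq U$ for every $j$, where $U := \bigcup_{i=1}^{l} L(x_i)$. The monovariant I would use is $\Phi(L) := \sum_{j=1}^{n} |L(y_j)\setminus U|$; each swap will decrease $\Phi$ by exactly one while preserving the value of $P(G,L)$.

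Suppose $\Phi(L)>0$ and pick any $j$ together with any $c\in L(y_j)\setminus U$. I first need to know that a replacement color exists, i.e.\ that $U\setminus L(y_j)\neq\emptyset$. If instead $U\subseteq L(y_j)$, then because $c\notin U$ we would have $U\cup\{c\}\subseteq L(y_j)$, giving $m=|L(y_j)|\geq |U|+1\geq |L(x_1)|+1=m+1$, a contradiction. So fix any $c'\in U\setminus L(y_j)$ and let $L'$ be obtained from $L$ by replacing $c$ with $c'$ in $L(y_j)$; the $x$-lists are untouched, so $U$ does not change, $L'$ is still an $m$-assignment, and $\Phi(L')=\Phi(L)-1$.

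The main step is to show $P(G,L')\leq P(G,L)$. Since $\{x_1,\ldots,x_l\}$ is independent and every $y_j$ is joined to every $x_i$, one can factor
\[
P(G,L)=\sum_{f\in L(x_1)\times\cdots\times L(x_l)}\ \prod_{j=1}^{n}\bigl|L(y_j)\setminus\{f(x_1),\ldots,f(x_l)\}\bigr|,
\]
and similarly for $L'$. For any such $f$ and any $k\neq j$ the $k$-th factor is identical in the two products, and the $j$-th factor differs only through the replacement of $c$ by $c'$: it changes by $\mathbf{1}[c'\notin\{f(x_1),\ldots,f(x_l)\}]-\mathbf{1}[c\notin\{f(x_1),\ldots,f(x_l)\}]$. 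Because $c\notin U$, the second indicator is always $1$, while the first is at most $1$. Hence every product in the sum for $L'$ is at most the corresponding product for $L$, giving $P(G,L')\leq P(G,L)$. Minimality of $L$ forces equality, so $L'$ is again a minimizer.

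Iterating this swap finitely many times (since $\Phi$ is a nonnegative integer that strictly decreases) produces the required $m$-assignment. The main obstacle is the inequality $P(G,L')\leq P(G,L)$, but once the factored expression above is written down it reduces to the elementary observation that a color outside $U$ can never be blocked by any coloring of the $x$-side, so replacing it with any other color can only weakly decrease the number of extensions to $y_j$.
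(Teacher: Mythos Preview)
Your proof is correct and follows essentially the same approach as the paper: start from a minimizing $m$-assignment, swap an ``outside'' color $c\in L(y_j)\setminus U$ for an ``inside'' color $c'\in U\setminus L(y_j)$, show this does not increase the number of proper colorings, and iterate. The only minor technical difference is that the paper establishes $P(G,L')\leq P(G,L)$ by building an explicit injection from proper $L'$-colorings to proper $L$-colorings, whereas you write down the factored expression $P(G,L)=\sum_{f}\prod_{j}|L(y_j)\setminus\{f(x_1),\ldots,f(x_l)\}|$ and compare termwise; your route is a bit more direct and avoids the small case analysis for injectivity.
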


With the intent of bounding $\tau(K_{2,n})$ from above, we prove a useful lemma which further characterizes the structure of a list assignment for $K_{2,n}$ that has as few proper list colorings as possible.  For the lemma below, note that it is easy to show that $P(K_{2,n},m) = m(m-1)^n + m(m-1)(m-2)^n$ for each $m \in \N$.

\begin{lem}\label{lemma:dm-1bad}
Let $G = K_{2,n}$ with bipartition $\{x_1, x_2\}, \{y_1, \hdots, y_n\}$ and $n \geq 3$. If $L$ is an $m$-assignment for $G$ with $m\geq 3$, $\vert L(x_1)\cap L(x_2)\vert = m-1$, and $L(y_j)\subseteq L(x_1)\cup L(x_2)$ for all $j\in [n]$, then $P(G,L)\geq P(G,m) = m(m-1)^n + m(m-1)(m-2)^n$.
\end{lem}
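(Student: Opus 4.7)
I would begin by exploiting the structure forced by the hypotheses: since $|L(x_1) \cap L(x_2)| = m-1$, write $L(x_1) = A \cup \{a\}$ and $L(x_2) = A \cup \{b\}$ with $|A| = m-1$ and $a \neq b$, so that $|L(x_1) \cup L(x_2)| = m+1$. Each $L(y_j)$ then determines a unique color $c_j \in A \cup \{a, b\}$ with $L(y_j) = (A \cup \{a, b\}) \setminus \{c_j\}$. Writing $n_z = |\{j : c_j = z\}|$ for $z \in A \cup \{a, b\}$ and $N_I = \sum_{\gamma \in A} n_\gamma$, I would enumerate $P(G, L) = \sum_{(\alpha, \beta) \in L(x_1) \times L(x_2)} \prod_{j=1}^n |L(y_j) \setminus \{\alpha, \beta\}|$ by splitting into monochromatic pairs (which forces $\alpha = \beta \in A$) and off-diagonal pairs to obtain
\[
P(G, L) = \sum_{\gamma \in A} m^{n_\gamma}(m-1)^{n-n_\gamma} + \sum_{\substack{(\alpha, \beta) \in L(x_1) \times L(x_2)\\ \alpha \neq \beta}} (m-1)^{n_\alpha + n_\beta}(m-2)^{n - n_\alpha - n_\beta}.
\]

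Setting $s = m/(m-1)$ and $r = (m-1)/(m-2)$, each monochromatic term equals $(m-1)^n s^{n_\gamma}$ and each off-diagonal term equals $(m-2)^n r^{n_\alpha + n_\beta}$. Since $P(G, m) = m(m-1)^n + m(m-1)(m-2)^n$ has $m$ monochromatic terms while $L$ yields only $m-1$, and $m(m-1)$ off-diagonal terms while $L$ yields $m^2 - m + 1$, a short rearrangement gives
\[
P(G, L) - P(G, m) = (m-1)^n \Phi + (m-2)^n \Psi - \bigl[(m-1)^n - (m-2)^n\bigr],
\]
where $\Phi := \sum_{\gamma \in A}(s^{n_\gamma} - 1) \geq 0$ and $\Psi := \sum_{\alpha \neq \beta}(r^{n_\alpha + n_\beta} - 1) \geq 0$. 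Thus the lemma reduces to the inequality $(m-1)^n \Phi + (m-2)^n \Psi \geq (m-1)^n - (m-2)^n$.

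To close this inequality, I would extract exactly the right lower bound from each side. Isolating the single off-diagonal pair $(\alpha, \beta) = (a, b)$ inside $\Psi$ gives $\Psi \geq r^{n_a + n_b} - 1 = r^{n-N_I} - 1$, and Bernoulli's inequality $s^{n_\gamma} \geq 1 + n_\gamma(s-1) = 1 + n_\gamma/(m-1)$ summed over $\gamma \in A$ yields $\Phi \geq N_I/(m-1)$. Substituting these bounds and using $(m-2)^n r^{n-N_I} = (m-1)^{n-N_I}(m-2)^{N_I}$ reduces the target to
\[
(m-1)^{N_I} - (m-2)^{N_I} \leq N_I (m-1)^{N_I-1},
\]
which follows at once from the factorization $(m-1)^{N_I} - (m-2)^{N_I} = \sum_{j=0}^{N_I-1} (m-1)^{N_I-1-j}(m-2)^j$, whose $N_I$ summands are each at most $(m-1)^{N_I-1}$. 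The main obstacle I anticipate is finding the right algebraic decomposition of $P(G, L) - P(G, m)$: a naive term-by-term comparison with $P(G, m)$ fails because $L$'s monochromatic sum has one fewer term than $P(G, m)$'s, and only by pairing the extra off-diagonal pair $(a, b)$ in $\Psi$ with the linear Bernoulli lower bound on $\Phi$ do we obtain a matched set of estimates that fill in exactly the missing $(m-1)^n$; any looser bound on either $\Phi$ or $\Psi$ will not suffice.
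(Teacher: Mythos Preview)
Your proof is correct and follows essentially the same route as the paper's: both compute the diagonal contributions as $(m-1)^n(m/(m-1))^{n_\gamma}$, apply Bernoulli's inequality $s^{n_\gamma}\ge 1+n_\gamma/(m-1)$ to bound them, and single out the special off-diagonal pair $(a,b)$ to supply the missing piece. The only structural difference is that the paper splits into two cases according to whether $N_I\ge m-1$ or $N_I\le m-2$ (using Bernoulli again on $(1-1/(m-1))^{N_I}$ in the second case), whereas you handle both at once via the telescoping identity $(m-1)^{N_I}-(m-2)^{N_I}=\sum_{j=0}^{N_I-1}(m-1)^{N_I-1-j}(m-2)^{j}\le N_I(m-1)^{N_I-1}$; since this inequality is valid for all $N_I\ge 0$, the paper's case split is in fact unnecessary, and your presentation is a bit cleaner.
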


We end Section~\ref{one} by studying $P_{\ell}(K_{2,n}, m)$ when $m \in \{2,3,4,5\}$.  Specifically, through some delicate case work we are able to prove the following.

\begin{thm} \label{thm: casework}
The following statements hold: \\
(i) $P_{\ell}(K_{2,n},3) = P(K_{2,n}, 3)$ whenever $2 \leq n \leq 10$ and $P_{\ell}(K_{2,n}, 3) < P(K_{2,n}, 3)$ whenever $n \geq 12$; \\
(ii) $P_{\ell}(K_{2,n},4) = P(K_{2,n}, 4)$ whenever $2 \leq n \leq 24$ and $P_{\ell}(K_{2,n}, 4) < P(K_{2,n}, 4)$ whenever $n \geq 27$; \\
(iii) $P_{\ell}(K_{2,n},5) = P(K_{2,n}, 5)$ whenever $2 \leq n \leq 43$ and $P_{\ell}(K_{2,n},5) < P(K_{2,n}, 5)$ whenever $n \geq 44$.\\
\end{thm}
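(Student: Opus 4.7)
The plan is to combine Lemmas~\ref{lemma:unionbadgeneral} and~\ref{lemma:dm-1bad} with a structured case analysis on the parameter $d := |L(x_1)\cap L(x_2)|$. By Lemma~\ref{lemma:unionbadgeneral}, it suffices to consider $m$-assignments with $L(y_j)\subseteq L(x_1)\cup L(x_2)$ for every $j$. The case $d=m$ reduces to ordinary $m$-coloring from a common palette and gives $P(G,L)=P(G,m)$, and Lemma~\ref{lemma:dm-1bad} handles $d=m-1$. So for each $m\in\{3,4,5\}$ the task splits into: (a) showing, for every $d\in\{0,1,\ldots,m-2\}$, that no $L$ with this intersection size beats $P(G,m)$ in the "equality" range; and (b) exhibiting for each $m$ a specific $L$ beating $P(G,m)$ at the first $n$ of the "strict inequality" range.

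For each $d$, I would write $L(x_1)=A\sqcup C$, $L(x_2)=B\sqcup C$ with $|A|=|B|=m-d$ and $|C|=d$, and count
\begin{equation*}
P(G,L)\;=\;\sum_{\substack{c_1\in L(x_1),\,c_2\in L(x_2)\\ c_1\ne c_2}}\;\prod_{j=1}^{n}\bigl|L(y_j)\setminus\{c_1,c_2\}\bigr|.
\end{equation*}
Each factor depends on $L(y_j)$ only through which of $c_1,c_2$ it contains, so I would classify each $y_j$ by the \emph{type} of its list (the $m$-subset of $A\cup B\cup C$ it equals) and let $n_T$ be the number of $y_j$ of type $T$. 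Expanding the outer sum by the location of $c_1,c_2$ in $A$, $B$, or $C$, the quantity $P(G,L)$ becomes an explicit expression in $(n_T)$ on the simplex $\sum_T n_T = n$. Using the $A\leftrightarrow B$ symmetry together with term-by-term convexity/majorization comparisons between competing types, I expect to show that the minimum is always attained when the $y_j$ use only one or two "extremal" types --- typically $L(y_j)\in\{L(x_1),L(x_2)\}$, or $L(y_j)=A\cup B$ when $d=0$.

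Once this reduction is carried out, each $d$-case reduces to a concrete inequality of the form $\sum_i \alpha_i r_i^n \ge m(m-1)^n + m(m-1)(m-2)^n$ in one or two integer parameters summing to $n$. For the "equality" direction, I would verify this inequality by a boundary check plus a monotonicity argument in $n$, or via Descartes' Rule of Signs and Rolle's Theorem applied to the associated real function (as foreshadowed by the abstract). For the "strict inequality" direction, I would simply take the worst-case $L$ produced by the case analysis at $n=12$, $n=27$, and $n=44$ respectively, verify $P(G,L)<P(G,m)$ by direct numerical computation, and observe that the gap grows monotonically in $n$ because the exponential base dominating $P(G,L)$ exceeds the base dominating $P(G,m)$.

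The main obstacle is controlling the combinatorial explosion of list types, particularly for $m=5$ with $d\in\{1,2,3\}$, where $C$ can be as large as $3$ and many mixed list configurations are \emph{a priori} possible. Reducing everything to a small number of extremal configurations without loss --- so that the resulting exponential inequalities are tight enough to yield the stated thresholds $10,24,43$ --- is the technical heart of the argument. The gaps in (i) at $n=11$ and in (ii) at $n\in\{25,26\}$ reflect exactly the places where the explicit bad assignments we can construct narrowly fail to improve on $P(G,m)$, while the case analysis is not yet sharp enough to rule out some better assignment; the absence of a gap in (iii) indicates that for $m=5$ the two directions meet precisely at $n=43\mid 44$.
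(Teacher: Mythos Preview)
Your high-level framing matches the paper: reduce via Lemma~\ref{lemma:unionbadgeneral} to lists contained in $L(x_1)\cup L(x_2)$, dispose of $d=m$ trivially and $d=m-1$ by Lemma~\ref{lemma:dm-1bad}, then handle the remaining values of $d$ separately. But the core of your plan --- reducing each $d$-case to one or two ``extremal'' list types by convexity/majorization --- is precisely the step the paper does \emph{not} attempt, and for good reason: there is no evidence that the minimizing $L$ concentrates on so few types. Indeed, the explicit bad assignments in Lemma~\ref{lemma:balancedextension} (the ones that witness the strict-inequality parts) spread the $y_j$ over four distinct list types, so the minimum over $L$ is not achieved at configurations of the shape you describe. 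Your proposal offers no mechanism for proving such a reduction, and it is the heart of the difficulty.

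What the paper does instead is bypass the search for the exact minimizer entirely. For each $d\le m-2$ it partitions the possible $m$-subsets $L(y_k)\subseteq L(x_1)\cup L(x_2)$ into a small number of symmetry classes, records how many pairs $(i,j)$ give $|L(y_k)\setminus\{i,j\}|=m,m-1,m-2$ for each class (this bookkeeping is Lemma~\ref{lemma:counting}), and then applies the AM--GM inequality to the sum $\sum_{(i,j)}\prod_k q_{k,(i,j)}$ over suitable blocks of $(i,j)$. This yields a closed-form lower bound in the class-multiplicities $(a_1,a_2,\ldots)$ (Lemma~\ref{lemma:dm-2} is the $d=m-2$ instance), and the paper then verifies \emph{computationally}, over all nonnegative integer tuples summing to $n$, that this bound exceeds $P(G,m)$ for the stated ranges of $n$. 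There is no analytic reduction to extremal types and no claim that the AM--GM bound is tight; the thresholds $10,24,43$ are simply where the computer check succeeds. Your invocation of Rolle's Theorem and Descartes' Rule of Signs is misplaced: in the paper those tools appear only in the proof of Theorem~\ref{thm:upperboundlinear}, not here.

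Your plan for the strict-inequality direction also has a gap. Verifying $P(G,L)<P(G,m)$ at a single $n$ and then appealing to monotonicity does not work as stated, because the graph $K_{2,n}$ and hence the assignment $L$ change with $n$; you need a \emph{family} of assignments. The paper handles this by quoting Theorem~\ref{theorem:firstthresholdpaper} for $n$ large enough and, for the finitely many intermediate $n$, exhibiting the explicit assignments of Lemma~\ref{lemma:balancedextension} and checking the inequality directly.
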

As indicated by Theorem~\ref{thm: casework}, it is unknown whether $P_{\ell}(K_{2,11},3) = P(K_{2,11}, 3)$, and it is unknown whether $P_{\ell}(K_{2,n},4) = P(K_{2,n}, 4)$ when $n \in \{25, 26\}$.  Theorem~\ref{thm: casework} also leads us to make the following conjecture.

\begin{conj} \label{conj: mono}
If $n,m \in \N$ and $P_{\ell}(K_{2,n+1},m) = P(K_{2,n+1},m)$, then $P_{\ell}(K_{2,n},m) = P(K_{2,n},m)$.
\end{conj}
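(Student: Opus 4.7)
The plan is to prove the contrapositive: given an $m$-assignment $L$ for $K_{2,n}$ with $P(K_{2,n},L)<P(K_{2,n},m)$, exhibit an $m$-assignment $L'$ for $K_{2,n+1}$ with $P(K_{2,n+1},L')<P(K_{2,n+1},m)$. The most natural construction is to obtain $L'$ by adjoining one new neighbor $y_{n+1}$ of $\{x_1,x_2\}$ carrying a carefully chosen $m$-list, leaving all other lists unchanged.

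By Lemma~\ref{lemma:unionbadgeneral} we may assume $L(y_j)\subseteq T:=L(x_1)\cup L(x_2)$ for every $j\in[n]$. Set $s:=|L(x_1)\cap L(x_2)|$ and $t:=|T|=2m-s$. Lemma~\ref{lemma:dm-1bad} rules out $s=m-1$, and $s=m$ forces $L(y_j)=T$ for every $j$, so that $L$ is (up to relabeling) the constant list assignment and $P(K_{2,n},L)=P(K_{2,n},m)$, contradicting badness. Hence $s\le m-2$, equivalently $t\ge m+2$.

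For each $(c_1,c_2)\in L(x_1)\times L(x_2)$ write $N(c_1,c_2):=\prod_{j=1}^n |L(y_j)\setminus\{c_1,c_2\}|$, and put $A:=\sum_{c_1=c_2}N(c_1,c_2)$ and $B:=\sum_{c_1\ne c_2}N(c_1,c_2)$. With $A^*:=m(m-1)^n$ and $B^*:=m(m-1)(m-2)^n$, the hypothesis becomes $A+B<A^*+B^*$, and for any $S:=L'(y_{n+1})$,
\[
P(K_{2,n+1},L')=\sum_{c_1,c_2}N(c_1,c_2)\cdot|S\setminus\{c_1,c_2\}|, \qquad P(K_{2,n+1},m)=(m-1)A^*+(m-2)B^*.
\]
The task reduces to producing $S$ for which $P(K_{2,n+1},L')<(m-1)A^*+(m-2)B^*$. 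A first attempt is to draw $S$ uniformly at random from $\binom{T}{m}$; a short calculation yields $\E[P(K_{2,n+1},L')]=\frac{m(t-1)}{t}A+\frac{m(t-2)}{t}B$, and some deterministic $S$ then achieves at most this expectation.

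The main obstacle is that since $t\ge m+2$, the averaged coefficients $m(t-1)/t$ and $m(t-2)/t$ strictly exceed $m-1$ and $m-2$ respectively, so the averaged inequality does not follow from $A+B<A^*+B^*$ alone; the uniform-average attempt falls short by a "weight transfer" from the off-diagonal to the diagonal. Closing this gap is the hard analytic step, and I would pursue two lines of attack. The first is to strengthen the structural lemmas by proving that for any bad $L$ the diagonal mass $A$ is itself bounded above in terms of $A^*$ — an analogue of Lemma~\ref{lemma:dm-1bad} for $s\le m-2$ — which would make the weight transfer harmless. The second is to replace the uniform average by a structured distribution on $\binom{T}{m}$ (for instance biasing toward $L(x_1)$, toward $L(x_2)$, or toward duplicating an existing $L(y_j)$) whose expected coefficients on $A$ and $B$ are calibrated against the specific deficit $A^*+B^*-A-B$ of $L$. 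I expect (i) to be the cleaner route if it holds, while (ii) is more likely to be broadly applicable and, accordingly, more delicate to execute.
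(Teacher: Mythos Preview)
The statement you are attempting to prove is \emph{Conjecture}~\ref{conj: mono}; the paper does not prove it. The paper only records that the conjecture holds for $m\in[2]$ (trivially) and for $m\in\{3,5\}$ as a by-product of the explicit case analysis in Theorem~\ref{thm: casework}, where for those $m$ the authors determine exactly the range of $n$ with $P_\ell(K_{2,n},m)=P(K_{2,n},m)$ and observe it is an initial segment. There is no general argument in the paper to compare against.

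Your proposal is a sensible line of attack on the open problem, and your preliminary reductions and expected-value computation are correct. You are also right that the uniform-average step fails precisely because $t\ge m+2$ inflates both coefficients. But neither of your two suggested repairs is close to a proof. Route~(i), bounding the diagonal mass $A$ for a bad $L$ with $s\le m-2$, is essentially the unresolved core of the whole problem: the paper's Lemma~\ref{lemma:dm-2} and the surrounding machinery are exactly an attempt to control such $L$, and they only succeed for bounded $n$ after multiple AM--GM applications and computer verification. There is no reason to expect a clean inequality of the form $A\le A^*$ (indeed, a single diagonal term can be as large as $m^n$ when $L(y_j)$ avoids the corresponding color). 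Route~(ii), tailoring the distribution on $S$, runs into the same issue from the other side: to calibrate the coefficients you would need quantitative information about how the deficit $A^*+B^*-A-B$ splits between the diagonal and off-diagonal parts, and no such information is available in general. So what you have is a clear statement of the obstruction, not a proof.
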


It is easy to show that Conjecture~\ref{conj: mono} is true when $m \in [2]$.  We also know by Theorem~\ref{thm: casework} that Conjecture~\ref{conj: mono} is true when $m \in \{3,5\}$.

In Section~\ref{two} we seek to prove an upper bound on $\tau(K_{2,n})$.  In our proof of Theorem~\ref{thm: casework}, the most delicate parts of the argument rely on several careful applications of the AM-GM inequality.  Our idea for proving a general upper bound on $\tau(K_{2,n})$ is to see what happens if we insist on only applying the AM-GM inequality once.  After some careful analysis which includes the use of classical results such as Rolle's Theorem and Descartes' Rule of Signs, we are able to deduce the following.

\begin{thm}\label{thm:upperboundlinear}
Let $n\in \mathbb{N}$ with $n\geq 2$, and let $G = K_{2,n}$. Then, 
\[\tau(G) \leq \left\lceil \frac{n + 2.05}{1.24}\right\rceil.\]
\end{thm}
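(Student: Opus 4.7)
Fix $n \geq 2$, let $L$ be an arbitrary $m$-assignment for $G = K_{2,n}$, and suppose $m \geq \lceil (n+2.05)/1.24 \rceil$; the goal is to show $P(G, L) \geq P(G, m)$. By Lemma~\ref{lemma:unionbadgeneral} I may assume $L(y_j) \subseteq L(x_1) \cup L(x_2)$ for every $j \in [n]$, and I set $d = |L(x_1) \cap L(x_2)|$. The case $d = m$ (which reduces directly to chromatic-polynomial counting) and the case $d = m-1$ (handled by Lemma~\ref{lemma:dm-1bad}) are already disposed of, so I focus on $0 \leq d \leq m-2$.

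Since $x_1$ and $x_2$ are nonadjacent,
\[
P(G, L) = \sum_{c_1 \in L(x_1)} \sum_{c_2 \in L(x_2)} \prod_{j=1}^{n} |L(y_j) \setminus \{c_1, c_2\}|
\]
is a sum of $m^2$ positive terms, and following the paper's hint I would apply the AM-GM inequality exactly once to this outer sum:
\[
P(G, L) \geq m^2 \left(\prod_{j=1}^n \Pi_j\right)^{1/m^2}, \qquad \Pi_j := \prod_{c_1, c_2} |L(y_j) \setminus \{c_1, c_2\}|.
\]
With $a_j = |L(y_j) \cap L(x_1) \cap L(x_2)|$, $p_j = |L(y_j) \cap (L(x_1) \setminus L(x_2))|$, and $q_j = |L(y_j) \cap (L(x_2) \setminus L(x_1))|$ (so that $a_j + p_j + q_j = m$), enumerating which pairs $(c_1, c_2)$ give factors of $m$, $m-1$, or $m-2$ yields the closed form
\[
\Pi_j = m^{p_j q_j}(m-1)^{m^2 - (m-1)a_j - 2 p_j q_j}(m-2)^{(m-1)a_j + p_j q_j}.
\]
Because $\log \Pi_j$ is a linear function of $a_j$ and $p_j q_j$ whose coefficients are both strictly negative (since $m-2 < m-1$ and $m(m-2) < (m-1)^2$), $\Pi_j$ is minimized on the boundary of its feasible region; a short case analysis in $d$ pins down the worst configuration --- essentially $a_j = d$ with $p_j, q_j$ as equal as possible --- giving a clean inequality $P(G, L) \geq m^2 \Phi(m, d)^{n/m^2}$ with $\Phi$ an explicit monomial in $\{m, m-1, m-2\}$.

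The last step is to verify $m^2 \Phi(m, d)^{n/m^2} \geq P(G, m) = m(m-1)^n + m(m-1)(m-2)^n$ for every admissible $d$ whenever $m \geq \lceil (n+2.05)/1.24 \rceil$. After a suitable substitution (for instance, dividing by $(m-1)^n$ and writing $t = (m-2)/(m-1)$), the desired inequality reduces to the question of whether a polynomial in $m$ (with $n$ and $d$ as parameters) is nonnegative on the relevant range. Descartes' Rule of Signs bounds the number of positive real roots of this polynomial by the number of sign changes in its coefficient sequence, while Rolle's Theorem, applied iteratively to successive derivatives, localizes the largest positive root as a linear function of $n$; the constants $1.24$ and $2.05$ emerge from optimizing this root-bound uniformly over admissible $d$.

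The main obstacle I anticipate is precisely this final root-localization: different values of $d$ produce different polynomial inequalities, so obtaining a threshold linear in $n$ requires uniformity in $d$. The somewhat awkward numerical constants strongly suggest that the Descartes--Rolle analysis is being pushed to be quantitatively sharp, and striking the right balance between simplicity (a single AM-GM step) and accuracy (a linear-in-$n$ threshold rather than, say, quadratic) is the key technical hurdle.
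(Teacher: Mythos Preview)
Your setup through the single AM--GM step is exactly right and matches the paper: your closed form for $\Pi_j$ agrees with the paper's expression, and the paper does minimize the resulting lower bound over the ``shape'' of $L(y_j)$ (in the paper's version, one first bounds $\Pi_j \geq m^{((m-d)/2)^2}(m-1)^{(m^2-d^2)/2}(m-2)^{((m+d)/2)^2}$ via $a_j \leq d$ and AM--GM on $(m-c_j)(m-b_j)$, then checks that among $d \in \{0,\dots,m-2\}$ this is smallest at $d = m-2$). So your $\Phi(m,d)$ and the reduction to proving
\[
m^2\bigl(m(m-1)^{2(m-1)}(m-2)^{(m-1)^2}\bigr)^{n/m^2} \;\geq\; m(m-1)^n + m(m-1)(m-2)^n
\]
are on target.

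The gap is the last paragraph. This inequality does \emph{not} reduce to a polynomial in $m$: the exponent $n/m^2$ on the left makes it genuinely transcendental, and no substitution like $t=(m-2)/(m-1)$ removes that. Consequently there is no ``polynomial in $m$ with $n,d$ as parameters'' to which Descartes' Rule could be applied, and the Rolle-to-successive-derivatives plan for localizing a largest root has no object to act on. This is the step where the paper does something you have not anticipated: it introduces a free parameter $\epsilon\in(0,1-1/m)$ and splits the two-term right-hand side, demanding that the $(m-1)^n$ summand be at most $(1-\epsilon)$ times the left side and the $(m-2)^n$ summand at most $\epsilon$ times it. Each of these, after taking logarithms, becomes a \emph{linear} inequality in $n$, yielding a window $g(\epsilon)\le n\le f(\epsilon)$ in which $P_\ell=P$ (this is Lemma~\ref{lemma:oneamgm} and its cleaned-up form Corollary~\ref{corollary:oneamgmbounds}). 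The constants $1.24$ and $2.05$ then come from choosing $\epsilon$ well: one shows (Lemma~\ref{lemma:rolle}) that $f(\epsilon)-g(\epsilon)>0$ on an interval whose left endpoint satisfies $g=1.24m-2.05$, so IVT produces an $\epsilon$ with $g(\epsilon)=n$ and $f(\epsilon)>n$. It is only inside \emph{that} lemma that Rolle and Descartes appear, and not in the way you describe: Rolle is used once to show $f-g$ has a unique zero on $(0,x_s)$ (hence stays positive on $(x_s,x_b)$), and Descartes is applied to an explicit degree-six polynomial obtained by replacing two exponentials via $e^x<1+x+x^2/2$ for $x<0$, in order to verify $f(x_s)-g(x_s)>0$ for all $m\geq 18$ (small $m$ are checked directly).
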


One will notice that the bound in Theorem~\ref{thm:upperboundlinear} is about 2.8 times better than the general upper bound of Wang, Qian, and Yan~\cite{WQ17} (i.e., $\tau(G) \leq (|E(G)|-1)/\ln(1+ \sqrt{2}) + 1$ for any graph $G$).  On the other hand, Theorem~\ref{thm:upperboundlinear} falls short of verifying Conjecture~\ref{conj: k2l}.  While we believe Conjecture~\ref{conj: k2l} is true, it seems that a proof of Conjecture~\ref{conj: k2l} will require a novel refinement of our approach or an entirely new approach.

We conclude Section~\ref{two} by showing how the results of this paper allow us to prove that $K_{2,3}$ is enumeratively chromatic-choosable and $\tau(K_{2,4}) = \tau(K_{2,5}) = 3$.   

\section{Tools and Small List Sizes} \label{one}

We begin by proving Lemma~\ref{lemma:unionbadgeneral}.

\begin{proof}
Let $L$ be an $m$-assignment for $G$ with $m \geq \chi_{\ell}(G)$ such that $P(G, L) = P_{\ell}(G, m)$. Let $Q = \bigcup_{i=1}^l L(x_i)$. If $L(y_j) \subseteq Q$ for each $j \in [n]$, then we are done. So, suppose there exists some $k \in [n]$ and $c \in L(y_k)$ such that $c \not \in Q$. Let $X = Q - L(y_k)$. Since $\lvert Q \rvert \geq m$, $\lvert L(y_k) \rvert = m$, and $\lvert L(y_k) - Q \rvert \geq 1$, we must have $\lvert X \rvert \geq 1$. Let $d$ be any element of $X$, and let $L'$ be the $m$-assignment for $G$ defined by $L'(v) = L(v)$ for each $v \in V(G) - \{y_k\}$ and $L'(y_k) = (L(y_k) - \{c\}) \cup \{d\}$. Let $S$ be the set of all proper $L$-colorings of $G$, and let $S'$ be the set of all proper $L'$-colorings of $G$. Note that since $m\geq \chi_{\ell}(G)$ , $S'$ is a non-empty set.

Let $h : S' \rightarrow S$ be the function defined by\begin{align*}    h(f) =    \begin{cases}    f & \text{if } f(y_k) \neq d, \\    g & \text{if } f(y_k) = d,    \end{cases}\end{align*} where $g$ is the proper $L$-coloring of $G$ defined by $g(v) = f(v)$ if $v \in V(G)- \{y_k\}$ and $g(y_k) = c$. We will now show that $h$ is injective.
Suppose $f_1, f_2 \in S'$ such that $f_1 \neq f_2$. Then, we must show that $h(f_1) \neq h(f_2)$. There are three cases to consider.
Case $(1)$: $f_1(y_k)\neq d $ and $f_2(y_k)\neq d$. Then $h(f_1) = f_1\neq f_2=h(f_2)$.
Case $(2)$: $f_1(y_k)=f_2(y_k)=d$. There exists $z \in V(G) - \{y_k\}$ such that $f_1(z)\neq f_2(z)$. Let $g_1 = h(f_1)$ and $g_2 = h(f_2)$. Then $g_1(z) = f_1(z)\neq f_2(z) = g_2(z)$. Thus, $h(f_1) \neq h(f_2)$.
Case $(3)$: $f_1(y_k)=d$ and $f_2(y_k) \neq d$. Notice $f_2 = h(f_2)$. Let $g_1 = h(f_1)$. Since $f_2(y_k) \in L'(y_k)$ and $c\notin L'(y_k)$, then $f_2(y_k) \neq c = g_1(y_k)$. Therefore, $h(f_1) = g_1\neq f_2 = h(f_2)$.
Thus, $h$ is injective. Therefore, $P(G, L') = \lvert S' \rvert \leq \lvert S \rvert = P(G, L) = P_{\ell}(G, m)$ which means that $P(G, L') = P_{\ell}(G, m)$. Hence, by repeatedly selecting such a pair $y_k,c$ and modifying the list assignment so that $c\not\in L'(y_k)$, we will eventually get an $m$-assignment $L^*$ for which $L^*(y_j)$ contains only colors of $Q$ for all $j \in [n]$ and $P(G, L^*) = P_{\ell}(G, m)$.
\end{proof}

Notice that since $\chi_{\ell}(K_{l,n}) \leq l+1$, Lemma~\ref{lemma:unionbadgeneral} can always be applied when $m \geq l+1$.  We now introduce notation that will be useful for us throughout the paper. Fix $G = K_{2,n}$ with bipartition $\{x_1, x_2\}, \{y_1, \hdots, y_n\}$ and let $L$ be an arbitrary $m$-assignment for $G$. Then, for each $(i, j) \in L(x_1) \times L(x_2)$, let $\mathcal{C}_{(i, j)}$ be the set of proper $L$-colorings of $G$ in which $x_1$ is colored with $i$ and $x_2$ is colored with $j$. Notice $P(G, L) = \sum_{(i, j) \in L(x_1) \times L(x_2)} \vert \mathcal{C}_{(i, j)}\vert$. 

We are now ready to prove Lemma~\ref{lemma:dm-1bad}.

\begin{proof}
For each $(i, j) \in L(x_1) \times L(x_2)$, $|\mathcal{C}_{(i, j)}| \geq (m - 2)^n$. Suppose without loss of generality that $L(x_1) = [m]$ and $L(x_2) = [m-1] \cup \{m+1\}$. Then $L(y_k) \in \binom{[m+1]}{m}$ for each $k \in [n]$. For each $a \in [m+1]$, let $z_a = \lvert \{k \in [n] : L(y_k) = [m + 1] - \{a\}\} \rvert$. Notice that $\sum_{a=1}^{m+1} z_a = n$.

Note
\begin{align*}
    \sum_{i=1}^{m-1} |\mathcal{C}_{(i, i)}| =  \sum_{i=1}^{m-1} m^{z_i} (m-1)^{n - z_i} &= (m-1)^{n} \sum_{i=1}^{m-1} \left(\frac{m}{m-1}\right)^{z_i}\\
    &= (m-1)^{n} \sum_{i=1}^{m-1} \left(1 + \frac{1}{m-1}\right)^{z_i}.
\end{align*}

Clearly, $(1+1/(m-1))^{z_i} \geq 1 + z_i/(m-1)$. Consequently, if we let $x = \sum_{a=1}^{m-1} z_a$, then

\[\sum_{i=1}^{m-1} |\mathcal{C}_{(i, i)}| \geq (m-1)^{n} \sum_{i=1}^{m-1} \left(1 + \frac{z_i}{m-1}\right) = (m-1)^{n} \left(m-1 + \frac{x}{m-1}\right).\]

If $x \geq m - 1$, then
\begin{align*}
    P(G, L)
    &= \sum_{(i, j) \in L(x_1) \times L(x_2)} |\mathcal{C}_{(i, j)}| \\
    &\geq (m-1)^{n} \left(m-1 + \frac{x}{m-1}\right)
        + \sum_{(i, j) \in (L(x_1) \times L(x_2)),\; i \neq j} |\mathcal{C}_{(i, j)}| \\
    &\geq m(m-1)^n + \sum_{(i, j) \in (L(x_1) \times L(x_2)),\; i \neq j} (m-2)^n \\
    &= m(m-1)^n + (m^2-m+1)(m-2)^n \\
    &> P(G, m).
\end{align*}

So we may assume $0 \leq x \leq m-2$. Notice
\begin{align*}
\vert\mathcal{C}_{(m, m+1)}\vert + \sum_{i=1}^{m-1} \vert\mathcal{C}_{(i, i)}\vert
    &\geq (m-2)^x (m-1)^{n-x} + (m-1)^{n} \left(m-1 + \frac{x}{m-1}\right). \\
    &= (m-1)^n \left[\left(\frac{m-2}{m-1}\right)^x + m-1+\frac{x}{m-1}\right]. 
\end{align*}
By Bernoulli's Inequality we have
\begin{align*}
    \left( \frac{m - 2}{m - 1} \right)^x = \left( 1 - \frac{1}{m - 1} \right)^x \geq 1 - \frac{x}{m - 1}
\end{align*}
which implies that
\begin{align*}
    \vert\mathcal{C}_{(m, m + 1)}\vert + \sum_{i = 1}^{m - 1} \vert\mathcal{C}_{(i, i)}\vert
    &\geq (m - 1)^n \left[\left(\frac{m-2}{m-1}\right)^x + m-1+\frac{x}{m-1}\right] \\
    &\geq (m - 1)^n \left[ 1 - \frac{x}{m - 1} + m - 1 + \frac{x}{m - 1} \right] \\
    &= m(m - 1)^n.
\end{align*}
Hence,
\begin{align*}
    P(G, L)
    &= \sum_{(i, j) \in L(x_1) \times L(x_2)} \vert \mathcal{C}_{(i, j)} \vert \\
    &= \left( \sum_{i = 1}^{m - 1} \vert \mathcal{C}_{(i, i)} \vert + \vert \mathcal{C}_{(m, m + 1)} \vert \right) + \sum_{\substack{(i, j) \in (L(x_1) \times L(x_2)) \\ (i, j) \neq (m, m + 1), \;i \neq j}} \vert \mathcal{C}_{(i, j)} \vert \\
    &\geq m(m - 1)^n + \sum_{\substack{(i, j) \in (L(x_1) \times L(x_2)) \\ (i, j) \neq (m, m + 1), \;i \neq j}} (m - 2)^n \\
    &= m(m - 1)^n + (m^2 - m)(m - 2)^n \\
    &= P(G, m).
\end{align*}
\end{proof}

Before proving Theorem~\ref{thm: casework}, we need several technical lemmas.  The first of which is a key Lemma in~\cite{KK22}.

\begin{lem}[\cite{KK22}]\label{lemma:paulconstruction}
Let $n, m, t \in \mathbb{N}$ with $n \geq 2$ and $m \geq n + 1$, and let $G = K_{n, n^nt}$ with bipartition $X, Y$ where $X = \{x_1, \ldots, x_n\}$ and $Y = \{y_1, \ldots, y_{n^nt}\}$. Let $S_k = \{m + n(k - 2) + \ell : \ell \in [n]\}$ for each $k \in [n]$, and let $A = \{\{s_1, \ldots, s_n\} : s_k \in S_k \text{ for each } k \in [n]\}$. Suppose $A = \{A_0, \ldots, A_{n^n - 1}\}$. Let $L$ be the $m$-assignment for $G$ defined by $L(x_k) = [m - n] \cup S_k$ for each $k \in [n]$ and $L(y_k) = [m - n] \cup A_{\lfloor (k - 1) / t \rfloor}$ for each $k \in [n^n t]$. Then
\begin{align*}
    P(G, L)
    &= n^n \prod_{i = 0}^n (m - i)^{t\binom{n}{i}(n - 1)^{n - i}} \\
    & + \sum_{N = 1}^n \sum_{S = 0}^{n - N} \left[ n^S \binom{n}{S} \binom{m - n}{N} \left( \sum_{i = 0}^{N - 1} (-1)^i \binom{N}{i} (N - i)^{n - S} \right) \right. \\
    & \cdot \left. \prod_{i = 0}^{S} (m - N - i)^{t \binom{S}{i} (n - 1)^{S - i} n^{n - S}} \right].
\end{align*}
\end{lem}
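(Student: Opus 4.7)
The plan is to compute $P(G, L)$ by first fixing the restriction of a proper $L$-coloring $c$ to the $x$-side. For each such partial coloring, let $S$ denote the number of indices $k \in [n]$ with $c(x_k) \in S_k$ and let $N$ denote the number of distinct colors from $[m-n]$ that appear among $c(x_1), \ldots, c(x_n)$; the parameters satisfy $0 \leq S \leq n$ and $0 \leq N \leq n-S$. The first term of the stated formula will come from the case $(S,N) = (n, 0)$, while the double sum will cover $N \geq 1$. Once the coloring on $X$ is fixed, the colorings of $Y$ factor as a product over the $n^n$ blocks of $t$ vertices $y_j$ sharing a common list $[m-n] \cup A_r$, so everything reduces to two independent enumerations.

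For the $X$-side count with $N \geq 1$, I would choose which $S$ of the $x_k$'s draw from their $S_k$ in $\binom{n}{S}$ ways, assign any of the $n$ elements of $S_k$ to each such $x_k$ in $n^S$ ways, choose the $N$ colors of $[m-n]$ to be used in $\binom{m-n}{N}$ ways, and then surject the remaining $n-S$ coordinates onto these $N$ colors. Inclusion-exclusion on which of the $N$ colors are missed yields $\sum_{i=0}^{N-1}(-1)^i \binom{N}{i}(N-i)^{n-S}$, where the $i = N$ term is dropped because $n - S \geq N \geq 1$ forces it to be zero. The special case $(S, N) = (n, 0)$ trivially contributes $n^n$ since every $x_k$ then takes one of $n$ values in $S_k$ freely.

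For the $Y$-side count, fix a block indexed by a transversal $A_r = \{a_1, \ldots, a_n\}$ with $a_k \in S_k$. Given $c|_X$, the forbidden colors for any $y_j$ in that block are the $N$ colors of $[m-n]$ used on $X$ together with those $c(x_k)$ lying in $A_r$; the latter can only occur at the $S$ coordinates where $c(x_k) \in S_k$, and then precisely when $c(x_k) = a_k$. Letting $i$ denote the number of such matches, each of the $t$ vertices in the block has $m - N - i$ permissible colors. The number of transversals $A_r$ producing exactly $i$ matches is $\binom{S}{i}(n-1)^{S-i} n^{n-S}$: pick which $i$ of the $S$ relevant positions match, assign one of $n-1$ non-matching elements of $S_k$ at each of the other $S-i$ relevant positions, and assign any element of $S_k$ at each of the $n-S$ unconstrained positions. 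This produces the factor $\prod_{i=0}^S (m-N-i)^{t\binom{S}{i}(n-1)^{S-i} n^{n-S}}$, which specializes correctly both in the first term (with $S = n$, $N = 0$) and in the general term. Combining the $X$- and $Y$-counts and summing over $(S, N)$, with $(n, 0)$ singled out, yields the lemma. The main delicate point is the bookkeeping on the $Y$-side: one must carefully separate the $S$ coordinates where a match can occur (contributing $(n-1)^{S-i}$ non-match choices) from the $n-S$ coordinates where the $A_r$-component is unconstrained relative to $c$ (contributing the free factor $n^{n-S}$), and check that the truncation of the surjection sum at $i = N-1$ is valid across all admissible $S$.
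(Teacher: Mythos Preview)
The paper does not prove this lemma; it is quoted verbatim from~\cite{KK22} and used as a black box, so there is no in-paper argument to compare against. Your proposal is a correct and complete proof. The two-stage decomposition (classify $c|_X$ by the pair $(S,N)$, then count extensions to $Y$ block by block) is exactly the natural approach, and you have handled the two points that require care: (i) the $Y$-side product over all $n^n$ transversals depends only on $(S,N)$ and not on the particular colors chosen on $X$, because at each of the $S$ ``relevant'' coordinates there is always exactly one matching element of $S_k$ and $n-1$ non-matching ones, while the remaining $n-S$ coordinates are unconstrained; and (ii) the $N+i$ forbidden colors in $L(y_j)$ are genuinely distinct, since the $N$ colors lie in $[m-n]$ and the $i$ matched colors lie in the pairwise disjoint sets $S_k$, all of which are disjoint from $[m-n]$. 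The truncation of the surjection sum at $i=N-1$ is valid since $n-S\geq N\geq 1$ in the double sum, and the omitted case $N=0$ forces $S=n$, giving precisely the first displayed term.
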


The next lemma is a natural extension of Lemma~\ref{lemma:paulconstruction} in the case where $n=2$ and the partite set of $G$ of size greater than 2 is not necessarily divisible by 4.  We omit its proof since the proof is merely simple computations.

\begin{lem}\label{lemma:balancedextension}
Let $t\in \mathbb{N}$, and let $G_0 = K_{2,4t}$ with bipartition $\{x_1, x_2\}$,$\{y_1, \hdots, y_{4t}\}$. Let $L_0$ be the $m$-assignment for $G_0$ defined in Lemma~\ref{lemma:paulconstruction}. For each $c\in [3]$, let $G_c = K_{2,4t+c}$ with bipartition $\{x_1, x_2\}$,$\{y_1, \hdots, y_{4t+c}\}$, and let $L_3$ be the $m$-assignment for $G_3$ given by 
\[L_3(y_k) = \begin{cases}
L_0(y_k) & k\in [4t] \\
[m-2]\cup \{m-1, m+1\} & k = 4t+1 \\
[m-2]\cup \{m, m+2\} & k = 4t+2\\
[m-2]\cup \{m-1, m+2\} & k = 4t+3.
\end{cases}
\] Further, for each $c\in [2]$, let $L_c$ be the $m$-assignment for $G_c$ satisfying $L_c(v) = L_3(v)$ for all $v\in V(G_c)$. Then,
\begin{align*}
        P(G_0, L_0)
        &= (m - 2)(m - 1)^{4t} + (m - 3)(m - 2)^{4t + 1} + 4(m - 2)^{2t + 1}(m - 1)^{2t} \\
        & + 4(m - 2)^t(m - 1)^{2t}m^t, \\
        P(G_1, L_1)
        &= (m - 2)(m - 1)^{4t + 1} + (m - 3)(m - 2)^{4t + 2} \\
        & + 2(2m - 3)(m - 2)^{2t + 1}(m - 1)^{2t} + 4(m - 2)^t(m - 1)^{2t + 1}m^t, \\
        P(G_2, L_2)
        &= (m - 2)(m - 1)^{4t + 2} + (m - 3)(m - 2)^{4t + 3} \\
        & + 4(m - 2)^{2t + 2}(m - 1)^{2t+1} + 2(2m^2-4m+1)(m - 2)^t(m - 1)^{2t}m^t, \quad \text{and} \\
        P(G_3, L_3)
        &= (m - 2)(m - 1)^{4t + 3} + (m - 3)(m - 2)^{4t + 4} \\
        & + 2(2m - 3)(m - 2)^{2t + 2}(m - 1)^{2t + 1} \\
        & + 2(2m^2 - 4m + 1)(m - 2)^t(m - 1)^{2t + 1}m^t.
    \end{align*}
\end{lem}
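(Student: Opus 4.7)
The plan is to compute
\[P(G_c, L_c) = \sum_{(i, j) \in L_c(x_1) \times L_c(x_2)} |\mathcal{C}_{(i, j)}|\]
directly using the fact that each $y_k$ in $K_{2, 4t+c}$ is adjacent only to $x_1, x_2$, so $|\mathcal{C}_{(i, j)}|$ factors as $\prod_{k=1}^{4t+c} |L_c(y_k) \setminus \{i, j\}|$. Crucially, $x_1$ and $x_2$ share a partite set and are not adjacent, so the sum includes the pairs with $i = j \in L(x_1) \cap L(x_2) = [m-2]$. For $c = 0$ the formula is a direct specialization of Lemma~\ref{lemma:paulconstruction} with $n = 2$: the leading product gives $4(m-2)^t(m-1)^{2t}m^t$, and the three summands with $(N, S) \in \{(1, 0), (1, 1), (2, 0)\}$ give $(m-2)(m-1)^{4t}$, $4(m-2)^{2t+1}(m-1)^{2t}$, and $(m-3)(m-2)^{4t+1}$, respectively.

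For $c \in \{1, 2, 3\}$, I would partition $L_c(x_1) \times L_c(x_2)$ into five classes based on how $(i, j)$ sits among $[m-2]$, $\{m-1, m\}$, and $\{m+1, m+2\}$, treating $i = j \in [m-2]$ as a class distinct from $i \neq j$ in $[m-2]$. By the symmetry built into $L_0$, the base product $\prod_{k=1}^{4t} |L_0(y_k) \setminus \{i, j\}|$ is constant on each class, so summing over each class reproduces one of the four terms of $P(G_0, L_0)$. Since each added $y_{4t+k}$ is also adjacent only to $x_1, x_2$, its contribution to $|\mathcal{C}_{(i, j)}|$ is just an extra multiplicative factor $|L_c(y_{4t+k}) \setminus \{i, j\}| \in \{m-2, m-1, m\}$, determined by how many of $i, j$ lie in $L_c(y_{4t+k}) \setminus [m-2]$. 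Thus the computation reduces to, for each class and each $c$, summing products of such factors over the pairs in the class.

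In the classes where at most one of $i, j$ lies outside $[m-2]$, the added multipliers are only $m-2$ or $m-1$, and the sums combine cleanly: the coefficient $2m - 3 = (m-2) + (m-1)$ emerges for $c \in \{1, 3\}$ from the two asymmetric choices of the non-$[m-2]$ coordinate, while for $c = 2$ the two added vertices jointly balance the asymmetry and the coefficient becomes $4$. The one delicate calculation is the class $(i, j) \in \{m-1, m\} \times \{m+1, m+2\}$ when $c \geq 2$: each added vertex contributes a multiplier in $\{m-2, m-1, m\}$ depending on which of $\{m-1, m\}$ and $\{m+1, m+2\}$ appear in its list, and summing the four products over the four pairs yields $2(m^2 - 2m) + 2(m-1)^2 = 2(2m^2 - 4m + 1)$, which is the source of that coefficient in $P(G_2, L_2)$; adjoining $y_{4t+3}$ supplies the extra factor of $m - 1$ seen in $P(G_3, L_3)$. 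No new ideas are required; the main obstacle is purely bookkeeping, keeping track of which of the three multiplier values each added vertex assigns to each of the four pairs in the last class.
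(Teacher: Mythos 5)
Your proposal is correct, and it is essentially the argument the paper has in mind: the authors omit the proof as "merely simple computations," and your outline is exactly that computation (I verified the specialization of Lemma~\ref{lemma:paulconstruction} at $n=2$, the class-by-class base values, and the per-vertex multipliers, including the sum $2(m^2-2m)+2(m-1)^2=2(2m^2-4m+1)$). One small caution for the write-up: in the $\{m-1,m\}\times\{m+1,m+2\}$ class for $c=3$, the four pairs pick up multipliers $m-1$, $m-2$, $m$, $m-1$ respectively, and the "extra factor of $m-1$" only emerges after summing, so that step should be carried out termwise rather than by multiplying the $c=2$ total by $m-1$.
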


Suppose that $G = K_{2,n}$ and its partite set of size two is $\{x_1,x_2 \}$.  It is worth mentioning that we suspect that when $\tau(G) > m$ there is an $m$-assignment $L$ for $G$ with $|L(x_1) \cap L(x_2)| = m-2$ such that $P(G,L) < P(G,m)$.  In other words, if $P_{\ell}(G,m) < P(G,m)$, then we suspect that there is always an $m$-assignment barrier in which the lists corresponding to $x_1$ and $x_2$ have exactly $m-2$ colors in common.  Moreover, in this situation the list assignments described in Lemma~\ref{lemma:balancedextension} are candidates for such a barrier.  Our next technical lemma is a combinatorial lemma that will be used repeatedly in the remainder of the paper.

\begin{lem}\label{lemma:counting}
Let $m\in \mathbb{N}$ and $L_1$, $L_2$ be two sets such that $\vert L_1\vert = \vert L_2\vert = m$. Then, let $A_1 = L_1\cap L_2$, $A_2 = L_1 - L_2$, and $A_3 = L_2-L_1$. Suppose that $K\subseteq L_1\cup L_2$ with $\vert K \vert = m$. Let $a_1 = \vert A_1\vert$, $a_2 = \vert A_2\vert$, $a_3 = \vert A_3\vert$, $k_{1} = \vert K\cap A_1\vert$, $k_{2} = \vert K\cap A_2\vert$, and $k_{3} = \vert K\cap A_3\vert$. Finally, for any $(i,j)\in L_1\times L_2$, let $q_{(i,j)} = \vert K - \{i,j\}\vert$. Then, the following statements hold: 
\begin{enumerate}
    \item Let $B_1$ be the set of all $(i,i)\in A_1\times A_1$. Then, amongst all ordered pairs $b\in B_1$, exactly $k_1$ of them satisfy $q_{b} = m-1$, and exactly $a_1 - k_1$ satisfy $q_{b} = m$.
    \item Let $B_2$ be the set of all $(i,j)\in A_1\times A_1$ satisfying $i\neq j$. Then, amongst all ordered pairs $b\in B_2$, exactly $k_1(k_1-1)$ of them satisfy $q_b = m-2$, exactly $2k_1(a_1-k_1)$ of them satisfy $q_b = m-1$, and exactly $(a_1-k_1)(a_1-k_1-1)$ of them satisfy $q_b = m$.
    \item Let $B_3$ be the set of all $(i,j)\in A_1\times A_3$. Then, amongst all ordered pairs $b\in B_3$, exactly $k_1k_3$ of them satisfy $q_b = m-2$, exactly $k_1a_3 + k_3a_1 - 2k_1k_3$ of them satisfy $q_b = m-1$, and exactly $(a_1-k_1)(a_3-k_3)$ of them satisfy $q_b = m$.
    \item Let $B_4$ be the set of all $(i,j)\in A_1\times A_2$. Then, amongst all ordered pairs $b\in B_4$, exactly $k_1k_2$ of them satisfy $q_b = m-2$, exactly $k_1a_2 + k_2a_1 - 2k_1k_2$ of them satisfy $q_b = m-1$, and exactly $(a_1-k_1)(a_2-k_2)$ of them satisfy $q_b = m$.
    \item Let $B_5$ be the set of all $(i,j)\in A_2\times A_3$. Then, amongst all ordered pairs $b\in B_5$, exactly $k_2k_3$ of them satisfy $q_b = m-2$, exactly $k_2a_3 + k_3a_2 - 2k_2k_3$ of them satisfy $q_b = m-1$, and exactly $(a_2-k_2)(a_3-k_3)$ of them satisfy $q_b = m$.
\end{enumerate}
\end{lem}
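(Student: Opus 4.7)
The plan rests on the simple identity $q_{(i,j)} = m - |K \cap \{i,j\}|$, which holds because $K \cap \{i,j\}$ and $K - \{i,j\}$ partition $K$. Thus $q_{(i,j)}$ takes only one of three values, $m$, $m-1$, or $m-2$, corresponding to whether zero, one, or two distinct elements of $\{i,j\}$ lie in $K$. So for each of the five items it suffices to count, for each of these three possibilities, how many ordered pairs in the relevant $B_\alpha$ realize it.

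The common engine is casework on the indicators $[i \in K]$ and $[j \in K]$, followed by the product rule. Item (1) is the degenerate case $i=j$: the set $\{i\}$ has size one, so $q_{(i,i)} = m-1$ exactly when $i \in K \cap A_1$ (giving $k_1$ pairs) and $q_{(i,i)} = m$ when $i \in A_1 - K$ (giving $a_1 - k_1$ pairs).

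For items (2)--(5), every pair $(i,j) \in B_\alpha$ satisfies $i \neq j$, either because $B_2$ is defined that way or because its factors come from the pairwise disjoint sets $A_1, A_2, A_3$. Consequently $|K \cap \{i,j\}| = [i \in K] + [j \in K]$, and each item splits into four subcases indexed by these two indicators. In each subcase the number of pairs is the product of the number of valid $i$'s in the appropriate $A_\alpha$ and the number of valid $j$'s in the appropriate $A_\beta$; the $q_b = m-1$ count is then the sum of the two subcases in which exactly one indicator equals $1$. For instance, in (3) the four subcases contribute $k_1 k_3$, $k_1(a_3 - k_3)$, $(a_1 - k_1) k_3$, and $(a_1 - k_1)(a_3 - k_3)$ pairs, and the middle two sum to $k_1 a_3 + k_3 a_1 - 2 k_1 k_3$, matching the claim. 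Items (4) and (5) are formally identical, just with the index pair $(1,3)$ replaced by $(1,2)$ and $(2,3)$ respectively.

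The one subtlety worth flagging is item (2), where both factors are $A_1$ and the constraint $i \neq j$ turns the four subcase counts into falling-factorial products $k_1(k_1-1)$, $k_1(a_1 - k_1)$, $(a_1 - k_1)k_1$, and $(a_1 - k_1)(a_1 - k_1 - 1)$ rather than the naive squares; the middle two again combine to give the stated coefficient $2 k_1 (a_1 - k_1)$. Apart from this, the lemma is a pure bookkeeping exercise, so I do not anticipate any substantive obstacle, only the need to keep the disjoint-piece decomposition of $L_1 \cup L_2$ straight.
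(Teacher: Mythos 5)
Your proposal is correct and follows essentially the same route as the paper: both rest on the identity $q_{(i,j)} = m - \vert K \cap \{i,j\}\vert$ and then count pairs by casework on whether $i$ and $j$ lie in $K$, with the falling-factorial adjustment in item (2). The only cosmetic difference is that you obtain the $q_b = m-1$ counts by summing the two mixed subcases directly, whereas the paper gets them by subtracting the extreme counts from $\vert B_\alpha\vert$; these are trivially equivalent.
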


\begin{proof}
Note $q_{(i,j)} = m - \vert K\cap \{i,j\}\vert$, which implies $q_{(i,j)}\in \{m, m-1, m-2\}$ for all $(i,j)\in L_1\times L_2$.

First, consider all $(i,i)\in B_1$, of which there are $\vert B_1\vert = a_1$. In this case, $\vert K\cap \{i\}\vert = 1$ if and only if $i\in K\cap A_1$, giving us exactly $k_1$ possible $(i,i)$ satisfying $q_{(i,i)} = m-1$. Moreover, since we clearly have $\vert K\cap \{i\}\vert \leq 1$, it follows that the number of $(i,i)\in B_1$ with the property $\vert K\cap \{i\}\vert = 0$ is exactly equal to $\vert B_1\vert - k_1$, from which statement 1 follows.

Next, consider all $(i,j)\in B_2$, of which there are $\vert B_2\vert = a_1(a_1-1)$. In this case, $\vert K\cap \{i,j\}\vert = 2$ if and only if $i,j\in K\cap A_1$; therefore, there are exactly $k_1(k_1-1)$ possible $(i,j)$ such that $q_{(i,j)} = m-2$. Similarly, $\vert K\cap \{i,j\}\vert = 0$ if and only if $i,j\notin K\cap A_1$, giving us exactly $(a_1-k_1)(a_1-k_1-1)$ possible $(i,j)$ satisfying $q_{(i,j)} = m$. It follows that the number of $(i,j)\in B_2$ with the property $\vert K\cap \{i,j\}\vert = 1$ is exactly equal to $\vert B_2\vert - (k_1(k_1-1) + (a_1-k_1)(a_1-k_1-1))$, from which statement 2 follows.

Finally, consider when $(i,j)\in \bigcup_{i=3}^5B_i$. Since $B_3$, $B_4$, and $B_5$ are pairwise disjoint, we assume without loss of generality that $i\in A_p$ and $j\in A_q$ for $p,q\in [3]$ and $p\neq q$. Under this condition, there are $a_pa_q$ such $(i,j)$. Now, we have that $\vert K\cap \{i,j\}\vert = 2$ if and only if $i\in K\cap A_p$ and $j\in K\cap A_q$; thus, there are exactly $k_pk_q$ possible $(i,j)$ satisfying $q_{(i,j)} = m-2$. Similarly, $\vert K\cap \{i,j\}\vert = 0$ if and only if $i\notin K\cap A_p$ and $j\notin K\cap A_q$, giving us exactly $(a_p-k_p)(a_q-k_q)$ possible $(i,j)$ such that $q_{(i,j)}=m-1$. It follows that the number of $(i,j)$ with $i\in A_p$, $j\in A_q$, and the property $\vert K\cap \{i,j\}\vert = 1$, is exactly equal to $a_pa_q - (k_pk_q + (a_p-k_p)(a_q-k_q))$, from which statements 3, 4, and 5 all follow, so we are done.
\end{proof}

Suppose that $G = K_{2,n}$ and its partite set of size two is $\{x_1,x_2 \}$.  Our final technical lemma gives a general lower bound on $P(G,L)$ when $L$ is an $m$-assignment for $G$ with $|L(x_1) \cap L(x_2)| = m-2$.  This lower bound will be quite useful when we prove Theorem~\ref{thm: casework}.

\begin{lem}\label{lemma:dm-2}
Let $m\in \mathbb{N}$, $m\geq 3$, and $G = K_{2,n}$ with bipartition $\{x_1, x_2\}$, $\{y_1, \hdots, y_n\}$. Let $L$ be an $m$-assignment for $G$ satisfying $\vert L(x_1)\cap L(x_2)\vert = m-2$ and $L(y_j)\subseteq L(x_1)\cup L(x_2)$ for all $j\in [n]$. Then, when $m > 3$, the following inequality holds for all ordered quadruples of nonnegative integers $(a_1, a_2, a_3, a_4)$ satisfying $a_1+a_2+a_3+a_4=n$:
\begin{align*}
    P(G,L)&\geq (m-2)\left[(m-1)^{(m-2)(a_1+a_2) + (m-3)a_3+(m-4)a_4}m^{a_3+2a_4}\right]^{1/(m-2)}\\
    &\qquad + 4(m-2)\left[(m-2)^{2(m-2)(a_1+a_2)+3(m-3)a_3+4(m-4)a_4}\right.\\
    &\qquad \left.(m-1)^{2(m-2)(a_1+a_2)+ma_3+8a_4}m^{a_3}\right]^{1/(4(m-2))} \\
    &\qquad + 4\left[(m-2)^{a_2+2a_3+4a_4}(m-1)^{4a_1+2a_2+2a_3}m^{a_2}\right]^{1/4} + Q(m),
\end{align*}
where
\begin{align*}
    Q(m) &= (m-2)(m-3)\left[(m-2)^{(m-2)(m-3)(a_1+a_2) + (m-4)((m-3)a_3 + (m-5)a_4)}\right.\\
    &\qquad \left.(m-1)^{2(m-3)a_3+4(m-4)a_4}m^{2a_4}\right]^{1/((m-2)(m-3))}.
\end{align*}
When $m=3$, the same inequality holds if we instead let $Q(m)=0$ and impose the additional constraint that $a_4=0$.
\end{lem}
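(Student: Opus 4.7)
The plan is to decompose $P(G,L) = \sum_{(i,j)\in L(x_1)\times L(x_2)} |\mathcal{C}_{(i,j)}|$ by partitioning $L(x_1)\times L(x_2)$ into four carefully chosen blocks, applying AM-GM to the sum over each block, and showing that the four resulting lower bounds match the four terms in the claimed inequality.

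Write $A_1 = L(x_1)\cap L(x_2)$, $A_2 = L(x_1)\setminus L(x_2)$, $A_3 = L(x_2)\setminus L(x_1)$, so that $|A_1|=m-2$ and $|A_2|=|A_3|=2$. Each $L(y_k)$ is an $m$-subset of the $(m+2)$-element set $A_1\cup A_2\cup A_3$ and therefore omits exactly two elements from it; let $(r_1,r_2,r_3)$ record the number of omissions from $A_1$, $A_2$, $A_3$. Group the $y_k$'s into four types by this triple: Type 1 has $r_1=0$ with $\{r_2,r_3\}=\{0,2\}$; Type 2 has $(r_1,r_2,r_3)=(0,1,1)$; Type 3 has $r_1=1$ with $\{r_2,r_3\}=\{0,1\}$; Type 4 has $(r_1,r_2,r_3)=(2,0,0)$. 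Let $a_1,a_2,a_3,a_4$ count the $y_k$'s in each type, so $a_1+a_2+a_3+a_4=n$.

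Partition $L(x_1)\times L(x_2)$ as the disjoint union of $B_1=\{(i,i):i\in A_1\}$, $B_2=\{(i,j)\in A_1\times A_1 : i\neq j\}$, $B_3=(A_1\times A_3)\cup(A_2\times A_1)$, and $B_4=A_2\times A_3$; these have sizes $m-2$, $(m-2)(m-3)$, $4(m-2)$, and $4$, matching the AM-GM multiplicities appearing in the four terms of the stated bound. For each block $B_s$ and each type of $y_k$, use Lemma~\ref{lemma:counting} with $K=L(y_k)$ to evaluate $\prod_{(i,j)\in B_s}|L(y_k)-\{i,j\}|$ as a monomial in $m-2$, $m-1$, $m$. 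The essential observation is that within each type this per-$y_k$ product is independent of the specific omission pattern: for $B_3$, the two patterns in Type 1 swap the $(m-1)$ and $(m-2)$ contributions between $A_1\times A_3$ and $A_2\times A_1$ and hence yield identical combined products, and analogously for Type 3. This is exactly why $A_1\times A_3$ and $A_2\times A_1$ must be merged into a single block $B_3$. Multiplying the per-$y_k$ products over all $k$ then produces exponents of $m-2$, $m-1$, $m$ that agree with those inside the four brackets of the bound.

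The final step is to apply AM-GM to each of $T_s:=\sum_{(i,j)\in B_s}|\mathcal{C}_{(i,j)}|$ separately and sum the four resulting inequalities. For the boundary case $m=3$, one has $|A_1|=1$, so $B_2=\emptyset$ (giving $Q(3)=0$) and Type 4 is impossible (so $a_4=0$); the remaining three AM-GM bounds are unaffected. The main obstacle is the combinatorial bookkeeping described in the previous paragraph, in particular discovering the four-type classification together with the merger of $A_1\times A_3$ and $A_2\times A_1$ — without this grouping the per-$y_k$ product fails to depend only on the type, and the clean exponents appearing in the stated bound do not emerge.
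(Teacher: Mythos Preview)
Your proposal is correct and essentially identical to the paper's own proof: the paper uses the same four-block partition of $L(x_1)\times L(x_2)$ (called $E,F,H,J$ there, your $B_1,B_2,B_3,B_4$), the same four-type classification of the $y_k$'s (called $\mathcal{X},\mathcal{Y},\mathcal{Z},\mathcal{W}$, matching your Types~1--4 via the omission-count triple), invokes Lemma~\ref{lemma:counting} to compute the $q$-value multiplicities for each type/block pair, and applies AM-GM to each block separately. Your remark that $A_1\times A_3$ and $A_2\times A_1$ must be merged so the per-$y_k$ product depends only on the type is a nice piece of motivation that the paper does not spell out.
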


\begin{proof}
For each $(i,j)\in L(x_1)\times L(x_2)$ and $k\in [n]$, let $q_{k, (i,j)} = \vert L(y_k) - \{i,j\}\vert$. 

Without loss of generality, assume $L(x_1) = [m]$ and $L(x_2) = [m-2]\cup\{m+1, m+2\}$. Then $L(y_k)\in \binom{[m+2]}{m}$ for each $k\in [n]$. For each $A\in \binom{[m+2]}{m}$, let $z_{A} = \vert \{k\in [n]: L(y_k) = A\}\vert$. 

Let $B = L(x_1)-L(x_2) = \{m-1,m\}$ and $C = L(x_2)-L(x_1) = \{m+1,m+2\}$. Then, let $\{\mathcal{X}, \mathcal{Y}, \mathcal{Z}, \mathcal{W}\}$ be the partition of $\binom{[m+2]}{m}$ satisfying
\begin{align*}
    \mathcal{X} &= \{[m-2]\cup B,[m-2]\cup C\} = \{L(x_1),L(x_2)\}, \\
    \mathcal{Y} &= \{Y: ([m-2] \subset Y)\}\cap \{Y: \vert Y\cap B\vert = 1\}, \\
    \mathcal{Z} &= \{Z: \vert Z\cap [m-2] \vert = m-3\}, \\
    \mathcal{W} &= \{W: \vert W\cap [m-2] \vert = m-4\},
\end{align*}
where if $m=3$ we let $\mathcal{W} = \emptyset$. Broadly speaking, this partition serves to separate the set of possible lists for each $y_k$ into four distinct list types. Let $b_1 = \sum_{S\in \mathcal{X}}z_S$, $b_2 = \sum_{S\in \mathcal{Y}}z_S$, $b_3 = \sum_{S\in \mathcal{Z}}z_S$, and $b_4 = \sum_{S\in \mathcal{W}}z_S$. Notice that $b_1+b_2+b_3+b_4=n$. 

Our general approach will be to compute lower bounds for the values of $\vert \mathcal{C}_{(i,j)}\vert$ across all $(i,j)\in L(x_1)\times L(x_2)$. More specifically, for each $(i,j)\in L(x_1)\times L(x_2)$, we analyze the value of $q_{k, (i,j)}$ for each $y_k$ by performing casework on each of the four possible list types as described in the previous paragraph. Note that we necessarily have $q_{k, (i,j)} \in \{m-2,m-1,m\}$.

Consider when $(i,j)\in E$ where $E = \{(a,a): a\in [m-2]\}$. When $L(y_k) \in \mathcal{X}$, Statement 1 in Lemma~\ref{lemma:counting} implies $\vert \{a\in [m-2]: q_{k, (a,a)} = m-1\}\vert = m-2$. By repeatedly applying Statement 1 in Lemma~\ref{lemma:counting} in the same way for each of the three remaining list types, we get that: if $L(y_k)\in \mathcal{Y}$, then $\vert \{a\in [m-2]: q_{k, (a,a)} = m-1\}\vert = m-2$; if $L(y_k)\in \mathcal{Z}$, then $\vert \{a\in [m-2]: q_{k, (a,a)} = m-1\}\vert = m-3$ and $\vert \{a\in [m-2]: q_{k, (a,a)} = m\}\vert = 1$; and, if $L(y_k)\in \mathcal{W}$, then $\vert \{a\in [m-2]: q_{k, (a,a)} = m-1\}\vert = m-4$ and $\vert \{a\in [m-2]: q_{k, (a,a)}=m\}\vert = 2$. In sum, our casework implies that $\sum_{(a,a)\in E} \vert \{y_k: q_{k, (a,a)} = m-1\}\vert = (m-2)(b_1+b_2) + (m-3)b_3 + (m-4)b_4$, and $\sum_{(a,a)\in E} \vert \{y_k: q_{k, (a,a)} = m\}\vert = b_3 + 2b_4$. So, by the AM-GM inequality:
\begin{equation}\label{eq:dm-2amgm1}
\sum_{(a,a)\in E}\vert\mathcal{C}_{(a,a)}\vert \geq (m-2)\left[(m-1)^{(m-2)(b_1+b_2)+(m-3)b_3+(m-4)b_4}m^{b_3+2b_4}\right]^{1/(m-2)}.
\end{equation}

Next, consider when $(i,j) \in F$ where $F = \{(a,b): a,b\in [m-2], a \neq b\}$. Note that when $m=3$, $\vert F\vert = 0$, which implies that $\sum_{(i,j)\in F}\vert \mathcal{C}_{(i,j)}\vert = 0$. So, assume $m\geq 4$. When $L(y_k) \in \mathcal{X}$, we may apply Statement 2 in Lemma~\ref{lemma:counting} to find that $\vert \{(i,j)\in F: q_{k, (i,j)} = m-2\}\vert = (m-2)(m-3)$. By repeatedly applying Statement 2 in Lemma~\ref{lemma:counting} in the same way for each of the three remaining list types, we get that: if $L(y_k)\in \mathcal{Y}$, then $\vert \{(i,j)\in F: q_{k, (i,j)} = m-2\} \vert = (m-2)(m-3)$; if $L(y_k)\in \mathcal{Z}$, then $\vert \{(i,j)\in F: q_{k, (i,j)} = m-2\}\vert = (m-3)(m-4)$ and $\vert \{(i,j)\in F: q_{k, (i,j)} = m-1\}\vert = 2(m-3)$; and, if $L(y_k)\in \mathcal{W}$, then $\vert \{(i,j)\in F: q_{k, (i,j)} = m-2\}\vert = (m-4)(m-5)$, $\vert \{(i,j)\in F: q_{k, (i,j)} = m-1\}\vert = 4(m-4)$, and $\vert \{(i,j)\in F: q_{k, (i,j)} = m\}\vert = 2$. In sum, our casework implies that $\sum_{(i,j)\in F} \vert \{y_k: q_{k, (i,j)} = m-2\}\vert = (m-2)(m-3)(b_1+b_2) + (m-4)((m-3)b_3+(m-5)b_4)$, $\sum_{(i,j)\in F} \vert \{y_k: q_{k, (i,j)} = m-1\}\vert = 2(m-3)b_3+4(m-4)b_4$, and $\sum_{(i,j)\in F} \vert \{y_k: q_{k, (i,j)} = m\}\vert = 2b_4$. So, by the AM-GM inequality, we have that for $m \geq 4$:
\begin{align}\label{eq:dm-2amgm2}
\sum_{(i,j)\in F} \vert\mathcal{C}_{(i,j)}\vert &\geq (m-2)(m-3)\left[(m-2)^{(m-2)(m-3)(b_1+b_2) + (m-4)((m-3)b_3+(m-5)b_4)}\right. \notag\\
&\qquad \left.(m-1)^{2(m-3)b_3+4(m-4)b_4}m^{2b_4}\right]^{1/((m-2)(m-3))},
\end{align}

Next, consider when $(i,j) \in H$ where $H = ([m-2] \times C) \cup (B \times [m-2])$. In this case, when $L(y_k) \in \mathcal{X}$, we may apply Statements 3 and 4 in Lemma~\ref{lemma:counting} to find that $\vert \{(i,j)\in H: q_{k, (i,j)} = m-2\}\vert = 2(m-2)$ and $\vert \{(i,j)\in H: q_{k, (i,j)} = m-1\}\vert = 2(m-2)$. By repeatedly applying Statements 3 and 4 in Lemma~\ref{lemma:counting} in the same way for each of the three remaining list types, we get that: if $L(y_k)\in \mathcal{Y}$, then $\vert \{(i,j)\in H: q_{k, (i,j)} = m-2\} \vert = 2(m-2)$ and $\vert \{(i,j)\in H: q_{k, (i,j)} = m-1\} \vert = 2(m-2)$; if $L(y_k)\in \mathcal{Z}$, then $\vert \{(i,j)\in H: q_{k, (i,j)} = m-2\}\vert = 3(m-3)$, $\vert \{(i,j)\in H: q_{k, (i,j)} = m-1\}\vert = m$, and $\vert \{(i,j)\in H: q_{k, (i,j)} = m\}\vert = 1$; and, if $L(y_k)\in \mathcal{W}$, then $\vert \{(i,j)\in H: q_{k, (i,j)} = m-2\}\vert = 4(m-4)$ and $\vert \{(i,j)\in H: q_{k, (i,j)} = m-1\}\vert = 8$. In sum, our casework implies that $\sum_{(i,j)\in H} \vert \{y_k: q_{k, (i,j)} = m-2\}\vert = 2(m-2)(b_1+b_2) + 3(m-3)b_3+4(m-4)b_4$, $\sum_{(i,j)\in H} \vert \{y_k: q_{k, (i,j)} = m-1\}\vert = 2(m-2)(b_1+b_2)+mb_3+8b_4$, and $\sum_{(i,j)\in H} \vert \{y_k: q_{k, (i,j)} = m\}\vert = b_3$. So, by the AM-GM inequality:
\begin{align}\label{eq:dm-2amgm3}
\sum_{(i,j)\in H} \vert\mathcal{C}_{(i,j)}\vert &\geq 4(m-2)\left[(m-2)^{2(m-2)(b_1+b_2) + 3(m-3)b_3+4(m-4)b_4}\right. \notag\\
&\qquad \left.(m-1)^{2(m-2)(b_1+b_2)+mb_3+8b_4}m^{b_3}\right]^{1/(4(m-2))}.
\end{align}

Finally, consider when $(i,j) \in J$ where $J = B \times C$. In this case, when $L(y_k) \in \mathcal{X}$, we may apply Statement 5 in Lemma~\ref{lemma:counting} to find that $\vert \{(i,j)\in J: q_{k, (i,j)} = m-1\}\vert = 4$. By repeatedly applying Statement 5 in Lemma~\ref{lemma:counting} in the same way for each of the three remaining list types, we get that: if $L(y_k)\in \mathcal{Y}$, then $\vert \{(i,j)\in J: q_{k, (i,j)} = m-2\} \vert = 1$, $\vert \{(i,j)\in J: q_{k, (i,j)} = m-1\} \vert = 2$, and $\vert \{(i,j)\in J: q_{k, (i,j)} = m\} \vert = 1$; if $L(y_k)\in \mathcal{Z}$, then $\vert \{(i,j)\in J: q_{k, (i,j)} = m-2\}\vert = 2$ and $\vert \{(i,j)\in J: q_{k, (i,j)} = m-1\}\vert = 2$; and, if $L(y_k)\in \mathcal{W}$, then $\vert \{(i,j)\in J: q_{k, (i,j)} = m-2\}\vert = 4$. In sum, our casework implies that $\sum_{(i,j)\in J} \vert \{y_k: q_{k, (i,j)} = m-2\}\vert = b_2+2b_3+4b_4$, $\sum_{(i,j)\in J} \vert \{y_k: q_{k, (i,j)} = m-1\}\vert = 4b_4+2b_2+2b_3$, and $\sum_{(i,j)\in J} \vert \{y_k: q_{k, (i,j)} = m\}\vert = b_2$. So, by the AM-GM inequality:
\begin{align}\label{eq:dm-2amgm4}
\sum_{(i,j)\in J} \vert\mathcal{C}_{(i,j)}\vert \geq 4\left[(m-2)^{b_2+2b_3+4b_4}(m-1)^{4b_1+2b_2+2b_3}m^{b_2}\right]^{1/4}.
\end{align}
Finally, since
\begin{align*}
P(G,L) &= \sum_{(i,j)\in L(x_1)\times L(x_2)} \vert\mathcal{C}_{(i,j)}\vert \\
&= \sum_{(i,j)\in E} \vert\mathcal{C}_{(i,j)}\vert + \sum_{(i,j)\in F} \vert\mathcal{C}_{(i,j)}\vert + \sum_{(i,j)\in H} \vert\mathcal{C}_{(i,j)}\vert + \sum_{(i,j)\in J} \vert\mathcal{C}_{(i,j)}\vert,
\end{align*}
summing Inequalities~\ref{eq:dm-2amgm1}, \ref{eq:dm-2amgm2}, \ref{eq:dm-2amgm3}, and \ref{eq:dm-2amgm4} finishes the proof.
\end{proof}

Our last ingredient needed for the proof of Theorem~\ref{thm: casework} is the following theorem.

\begin{thm}[\cite{KK22}]\label{theorem:firstthresholdpaper}
Let $G = K_{2,n}$. Then, $P_{\ell}(G,m)<P(G,m)$ whenever \[\left\lfloor \frac{n}{4}\right\rfloor \geq (m-1)^2\ln{(16/7)}.\]
\end{thm}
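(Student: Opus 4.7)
The plan is to exhibit, for each $n$ satisfying the hypothesis, an explicit $m$-assignment $L$ for $G = K_{2,n}$ with $P(G,L) < P(G,m)$. The tailor-made tool is Lemma~\ref{lemma:balancedextension}: each $L_c$ it produces is a candidate barrier and comes equipped with a closed-form expression for the number of proper $L_c$-colorings.

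I would begin by writing $n = 4t + c$ with $t = \lfloor n/4 \rfloor$ and $c \in \{0,1,2,3\}$, so by hypothesis $t \geq (m-1)^2 \ln(16/7)$. Applying Lemma~\ref{lemma:balancedextension} with this $t$ and $c$ gives an $m$-assignment $L_c$ for $G$ and an explicit formula for $P(G, L_c)$, and the theorem reduces to verifying $P(G, L_c) < m(m-1)^n + m(m-1)(m-2)^n$ in each of the four cases. The main case is $c = 0$; the other three are handled identically with minor bookkeeping changes, since the formulas in Lemma~\ref{lemma:balancedextension} for $c \in \{1,2,3\}$ differ only by extra factors of $m-1$ or $m-2$ and small additive perturbations.

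For $c = 0$, after subtracting and collecting like powers the target inequality becomes
\[
2(m-1)^{4t} + (4m-6)(m-2)^{4t} > 4(m-2)^{2t+1}(m-1)^{2t} + 4(m-2)^t(m-1)^{2t} m^t.
\]
Setting $r = (m-2)/(m-1)$ and $s = (m-2)m/(m-1)^2 = 1 - 1/(m-1)^2$ and dividing by $(m-1)^{4t}$, this transforms into
\[
2 + (4m-6) r^{4t} > 4(m-2) r^{2t} + 4 s^t.
\]
The heart of the argument is the estimate on $s^t$. Since $(1 - 1/x)^x$ is increasing in $x$ with limit $1/e$, taking $x = (m-1)^2$ yields
\[
s^t \leq \bigl(1 - 1/(m-1)^2\bigr)^{(m-1)^2 \ln(16/7)} \leq e^{-\ln(16/7)} = 7/16,
\]
so $4 s^t \leq 7/4$, leaving a slack of $1/4$ against the leading $2$ on the left.

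The remaining work is to verify that the two auxiliary terms $4(m-2) r^{2t}$ and $(4m-6) r^{4t}$ fit inside this $1/4$ slack. Since $t \geq (m-1)^2 \ln(16/7) \geq (m-1)\ln(16/7)$ for $m \geq 2$, we have $r^{2t} \leq e^{-2t/(m-1)} \leq (7/16)^{2(m-1)}$, which decays doubly exponentially in $m$ and makes $4(m-2)r^{2t}$ negligible; the small values of $m$ (with $m=3$ being the tightest, where $r = 1/2$) can be settled by direct substitution. The main obstacle is the bookkeeping in these auxiliary estimates: the constant $\ln(16/7)$, rather than the slightly smaller $\ln 2$ that the pure dominant-term comparison would suggest, is precisely what is needed to absorb the $4(m-2)r^{2t}$ correction uniformly across all $m \geq 3$, and the same slack must be shown to survive for the analogous inequalities arising in the $c \in \{1,2,3\}$ cases.
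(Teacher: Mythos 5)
The paper does not actually prove this theorem — it is imported verbatim from \cite{KK22} — but your argument is correct and is evidently the original one: the construction of Lemma~\ref{lemma:balancedextension} and the constant $\ln(16/7)$, which arises exactly from $4\bigl(1-1/(m-1)^2\bigr)^{(m-1)^2\ln(16/7)}\le 4e^{-\ln(16/7)}=7/4$, both come from that same paper. I checked your $c=0$ reduction to $2+(4m-6)r^{4t}>4(m-2)r^{2t}+4s^t$, the bound $4(m-2)(7/16)^{2(m-1)}\le 4(7/16)^4<1/4$ (tightest at $m=3$), and the analogous reductions for $c\in\{1,2,3\}$, where the coefficient of $s^t$ becomes $4-2/(m-1)^2\le 4$ and the middle term is still at most $(4m-6)r^{2t}$; everything fits in the $1/4$ slack as you claim.
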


We are now ready to prove Theorem~\ref{thm: casework}.  Since the proofs of the three statements of Theorem~\ref{thm: casework} are similar, we will prove only Statement~(i).  The proofs of the other statements will be in Appendix~\ref{finish}.

\begin{proof}
Let $G = K_{2, n}$ with bipartition $\{x_1, x_2\}, \{y_1, \ldots, y_n\}$.  Since $\tau(C_4) = 2$ and we wish to begin by proving the first part of Statement~(i), we will assume that $3 \leq n \leq 10$.  Let $L$ be a 3-assignment for $G$ such that $L(y_j) \subseteq L(x_1) \cup L(x_2)$ for all $j \in [n]$ and $P(G, L) = P_{\ell}(G, 3)$ (we know such an $L$ exists by Lemma~\ref{lemma:unionbadgeneral}). For each $(i,j)\in L(x_1)\times L(x_2)$ and $k\in [n]$, let $q_{k,(i,j)} = \vert L(y_k) - \{i,j\}\vert$. Let $B = L(x_1) - L(x_2)$, $C = L(x_2) - L(x_1)$, $D = L(x_1) \cap L(x_2)$, and $d=\vert D\vert$. Clearly, $d\in \{0,1,2,3\}$. We will show that $P(G, L) \geq P(G, 3) = 3\cdot 2^n + 6$ for each possible $d$. From this, it follows that $P_{\ell}(G, 3) = P(G, 3)$.

If $d = 3$, then the fact that $L(y_j) \subseteq L(x_1) \cup L(x_2)$ for all $j \in [n]$ implies that $L$ assigns the same list to every vertex in $G$ which means $P(G, L) = P(G, 3)$. If $d = 2$, then by Lemma~\ref{lemma:dm-1bad} we have $P(G, L) \geq P(G, 3)$. If $d=1$, then by Lemma~\ref{lemma:dm-2} we have 
\begin{align*}
    P(G,L)\geq 2^{a_1}2^{a_2}3^{a_3} + 4\cdot 2^{a_1/2}2^{a_2/2}24^{a_3/4} + 4\cdot 2^{a_1}12^{a_2/4}2^{a_3/2}
\end{align*}
for all ordered triples of nonnegative integers $(a_1, a_2, a_3)$ satisfying $a_1+a_2+a_3=n$. It is easy to verify computationally that the expression above is at least $P(G,3)$ over all such ordered triples, for all $3\leq n\leq 10$. See the code in the first program in Appendix~\ref{code}. 

So, suppose that $d = 0$. Without loss of generality, assume that $L(x_1) = \{1, 2, 3\}$ and $L(x_2) = \{4, 5, 6\}$. Then $L(y_k) \in \binom{[6]}{3}$ for each $k \in [n]$. For each $A \in \binom{[6]}{3}$, let $z_A = \lvert \{k \in [n] : L(y_k) = A\} \rvert$. Notice that $\sum_{A \in
\binom{[6]}{3}} z_A = n$. 

Now, let $\{\mathcal{X}, \mathcal{Y}\}$ be the partition of $\binom{[6]}{3}$ satisfying 
\begin{align*}
\mathcal{X} &= \{\{1,2,3\},\{4,5,6\}\},\text{ and} \\
\mathcal{Y} &= \binom{[6]}{3} - \mathcal{X}.
\end{align*}
Let $a_1 = \sum_{S\in \mathcal{X}}z_S$ and $a_2 = \sum_{S\in \mathcal{Y}}z_S$.

Now, we compute a lower bound for $\vert \mathcal{C}_{(i,j)}\vert$ across all $(i,j)\in L(x_1)\times L(x_2)$. Note that we necessarily have $q_{k, (i,j)}\in \{1,2,3\}$. When $L(y_k)\in \mathcal{X}$, then we have $\vert L(y_k)\cap B\vert = 3$ or $\vert L(y_k)\cap B\vert = 0$. For such $k$, applying all five statements in Lemma~\ref{lemma:counting} gives $\vert \{(i,j)\in L(x_1)\times L(x_2): q_{k,(i,j)}=2\}\vert = 9$. When $L(y_k)\in \mathcal{Y}$, then we have $\vert L(y_k)\cap B\vert = 2$ or $\vert L(y_k)\cap B\vert = 1$. For such $k$, applying all five statements in Lemma~\ref{lemma:counting} in the same manner gives $\vert \{(i,j)\in L(x_1)\times L(x_2): q_{k,(i,j)}=1\}\vert = 2$, $\vert \{(i,j)\in L(x_1)\times L(x_2): q_{k,(i,j)}=2\}\vert = 5$, and $\vert \{(i,j)\in L(x_1)\times L(x_2): q_{k,(i,j)}=3\}\vert = 2$. So, by the AM-GM inequality:
\begin{align*}
    P(G, L) 
    &= \sum_{(i, j) \in L(x_1) \times L(x_2)} \vert\mathcal{C}_{(i, j)}\vert \\
    &\geq 9\cdot (2^{9a_1+5a_2}3^{2a_2})^{1/9} \\
    &= 9 \cdot 2^{a_1} 288^{a_2/9} \geq 9\cdot 288^{n/9}.
\end{align*}
It is easy to computationally verify that the expression above is greater than or equal to $P(G,3)$ for all $3\leq n\leq 10$. Since we have exhausted all possible values for $d$, the proof of the first part of Statement~(i) is complete.

Now, we turn our attention to the second part of Statement~(i).  If $n \geq 16$, then Theorem~\ref{theorem:firstthresholdpaper} implies $P_{\ell}(K_{2,n},3) < P(K_{2,n},3)$ since $\lfloor n/4\rfloor \geq 4\ln(16/7)$.  On the other hand, if $n \in \{12, 13, 14, 15\}$, it is easy to verify from Lemma~\ref{lemma:balancedextension} that there is a 3-assignment $L$ for $G$ with the property that $P(G,L) < P(G,3)$. 
\end{proof}

\section{A General Upper Bound} \label{two}

Notice that in our proof of Theorem~\ref{thm: casework}, the most delicate parts of the argument rely on several careful applications of the AM-GM inequality.  Our idea for proving a general upper bound on $\tau(K_{2,n})$ is to see what happens if we insist on only applying the AM-GM inequality once.  Our first lemma makes this precise.

\begin{lem}\label{lemma:oneamgm}
Let $m, n \in \mathbb{N}$ with $m \geq 4$ and $n \geq 3$. Let 
\[f_i(m) = \ln\left(1-\frac{i}{m}\right)-\frac{1}{m^2}\left(2(m-1)\ln\left(1-\frac{1}{m}\right)+(m-1)^2\ln\left(1-\frac{2}{m}\right)\right)\]
for each $i \in [2]$. Suppose $\epsilon \in \mathbb{R}$ satisfies $0 < \epsilon < 1 - 1 / m$. If 
\[\frac{\ln(\epsilon) + \ln(m) - \ln(m - 1)}{f_2(m)} \leq n \leq \frac{\ln(1 - \epsilon) + \ln(m)}{f_1(m)},\]
then $P_{\ell}(K_{2, n}, m) = P(K_{2, n}, m)$.
\end{lem}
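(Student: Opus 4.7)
The plan is to apply AM-GM exactly once to the sum defining $P(G,L)$, as Section~\ref{two} foreshadows. I begin by invoking Lemma~\ref{lemma:unionbadgeneral} to reduce to an $m$-assignment $L$ with $L(y_j)\subseteq L(x_1)\cup L(x_2)$ for every $j$ and $P(G,L) = P_\ell(G,m)$; set $d = |L(x_1)\cap L(x_2)|$. The case $d = m$ is trivial (the list assignment is constant) and the case $d = m-1$ gives $P(G,L) \geq P(G,m)$ by Lemma~\ref{lemma:dm-1bad}, so it suffices to treat $d \leq m-2$.

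For this range, AM-GM on the $m^2$ summands of $P(G,L) = \sum_{(i,j)\in L(x_1)\times L(x_2)} |\mathcal{C}_{(i,j)}|$ yields
\[P(G,L) \geq m^2 \Bigl(\prod_{k=1}^n \prod_{(i,j)} q_{k,(i,j)}\Bigr)^{1/m^2}.\]
The heart of the proof will be the uniform lower bound, for every $k$,
\[\prod_{(i,j)} q_{k,(i,j)} \geq T := m\cdot(m-1)^{2(m-1)}\cdot(m-2)^{(m-1)^2}.\]
To establish it, I parameterize $L(y_k)$ by $a = |L(y_k)\cap L(x_1)\cap L(x_2)|$, $b = |L(y_k)\cap(L(x_1)\setminus L(x_2))|$, and $c = |L(y_k)\cap(L(x_2)\setminus L(x_1))|$ (so $a+b+c = m$), and use Lemma~\ref{lemma:counting} to write $\prod q$ as $(m-2)^{?}(m-1)^{?}m^{?}$ with exponents explicit in $b,c$. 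Setting $u = \ln(m/(m-1))$ and $v = \ln((m-1)/(m-2))$, the inequality $\log\prod q \geq \log T$ collapses after cancellation to
\[v\bigl[(m-1)(b+c-1) - bc\bigr] + u(bc-1) \geq 0.\]
Since $d \leq m-2$ forces $b+c = m - a \geq 2$, the bound $bc \leq (b+c)^2/4$ together with a one-variable check in $s = b+c \in [2,m]$ gives $(m-1)(b+c-1) - bc \geq m-2$. A short split on whether $bc = 0$ or $bc \geq 1$, combined with $v > u > 0$, then finishes the claim.

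Granted the key claim, $P(G,L) \geq m^2 T^{n/m^2}$, and it remains only to check this is at least $P(G,m) = m(m-1)^n + m(m-1)(m-2)^n$. I write $m^2 T^{n/m^2} = (1-\epsilon)\,m^2 T^{n/m^2} + \epsilon\, m^2 T^{n/m^2}$ and demand the two matched inequalities $(1-\epsilon)\,m^2 T^{n/m^2} \geq m(m-1)^n$ and $\epsilon\, m^2 T^{n/m^2} \geq m(m-1)(m-2)^n$. Taking logarithms and noting $\log T = \ln m + 2(m-1)\ln(m-1) + (m-1)^2\ln(m-2)$, so that $f_i(m) = \ln(m-i) - (\log T)/m^2$ satisfies $f_1(m) > 0$ and $f_2(m) < 0$, these rearrange precisely to the upper bound $n \leq (\ln(1-\epsilon) + \ln m)/f_1(m)$ and the lower bound $n \geq (\ln \epsilon + \ln m - \ln(m-1))/f_2(m)$ in the hypothesis. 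The constraint $0 < \epsilon < 1 - 1/m$ supplies the correct sign on each numerator.

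The main obstacle is the uniform claim $\prod q \geq T$: the pair-count bookkeeping enabled by Lemma~\ref{lemma:counting} is what makes the right-hand side of the simplified inequality tractable, and the specific exponents $1,\,2(m-1),\,(m-1)^2$ in $T$ (summing to $m^2$) are precisely tuned so that the worst case $b = c = 1$ produces equality in $(m-1)(b+c-1) - bc \geq m-2$. Once that reduction is in place, the logarithmic accounting of the $\epsilon$-split reproduces the definitions of $f_1(m)$ and $f_2(m)$ verbatim, and the conclusion follows.
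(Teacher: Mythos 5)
Your proposal is correct, and it shares the paper's overall architecture (reduce via Lemma~\ref{lemma:unionbadgeneral}, dispose of $d\in\{m,m-1\}$ via Lemma~\ref{lemma:dm-1bad}, apply AM--GM once over the $m^2$ pairs, compute the exponents with Lemma~\ref{lemma:counting}, and close with the $(1-\epsilon)+\epsilon$ split that reproduces $f_1,f_2$). Where you genuinely diverge is the key middle estimate. The paper does \emph{not} prove a uniform per-$k$ bound directly: it first bounds each factor below by the $d$-dependent quantity $m^{((m-d)/2)^2}(m-1)^{(m^2-d^2)/2}(m-2)^{((m+d)/2)^2}$, using $a_k\le d$ together with a \emph{second} application of AM--GM, namely $(m-c_k)(m-b_k)\le((m+d)/2)^2$, and then shows by a ratio computation that this expression is decreasing in $d$, so the worst case $d=m-2$ yields $T=m(m-1)^{2(m-1)}(m-2)^{(m-1)^2}$. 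You instead prove the uniform claim $\prod_{(i,j)}q_{k,(i,j)}\ge T$ for every $k$ with $d\le m-2$ by reducing $\ln(\prod q/T)$ to the exact identity $v\bigl[(m-1)(b+c-1)-bc\bigr]+u(bc-1)$ with $u=\ln(m/(m-1))$, $v=\ln((m-1)/(m-2))$ (I checked this identity; it is right, using that the three exponents of $\prod q$ and of $T$ each sum to $m^2$), and then handling it with $b+c=m-a\ge m-d\ge 2$, the concavity check $(m-1)(s-1)-s^2/4\ge m-2$ on $s\in[2,m]$, and the split on $bc=0$ versus $bc\ge1$ with $v>u>0$. Your route is arguably cleaner: it truly uses AM--GM only once (matching the section's stated philosophy), it avoids the monotonicity-in-$d$ computation entirely, and it isolates exactly why the exponents $1$, $2(m-1)$, $(m-1)^2$ of $T$ are the right ones. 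The paper's route, by keeping $d$ as a parameter, makes visible how much is lost for each $d$ and is the version that generalizes to the $d$-by-$d$ casework of Theorem~\ref{thm: casework}. The only things you assert without proof are the signs $f_1(m)>0$ and $f_2(m)<0$; both follow immediately from your observation that $f_i(m)=\ln(m-i)-(\ln T)/m^2$ with $(\ln T)/m^2$ a weighted average of $\ln m$, $\ln(m-1)$, $\ln(m-2)$ (and the paper verifies them in the proof of Corollary~\ref{corollary:oneamgmbounds}), so this is not a gap.
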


\begin{proof}
Let $G = K_{2, n}$ with bipartition $\{x_1, x_2\}, \{y_1, \ldots, y_n\}$. Let $L$ be an $m$-assignment for $G$ such that $L(y_k) \subseteq L(x_1) \cup L(x_2)$ and $P(G, L) = P_{\ell}(G, m)$ (we know such an $L$ exists by Lemma~\ref{lemma:unionbadgeneral}). Let $A = L(x_1) \cap L(x_2)$, $B = L(x_1) - L(x_2)$, and $C = L(x_2) - L(x_1)$, and let $d = \lvert A \rvert$, $b = \vert B\vert$, $c = \vert C\vert$. Let $a_k = \vert L(y_k) \cap A\vert, b_k = \vert L(y_k) \cap B\vert$, and $c_k = \vert L(y_k) \cap C\vert$ for each $k \in [n]$. Notice that for any $k \in [n]$, $a_k + b_k + c_k = d + b = d + c = m$.

If $d = m$, then clearly $P_{\ell}(G, m) = P(G, L) = P(G, m)$. If $d = m - 1$, then by Lemma~\ref{lemma:dm-1bad} we have $P_{\ell}(G, m) = P(G, L) \geq P(G, m)$, which implies that $P_{\ell}(G, m) = P(G, m)$. Hence, for the rest of this proof, we assume that $d \leq m - 2$.

For each $k \in [n]$ and $(i, j) \in L(x_1) \times L(x_2)$, let $q_{k, (i, j)} = \lvert L(y_k) - \{i, j\} \rvert$. Then, by the AM-GM inequality,
\begin{align*}
    P(G, L)
    &= \sum_{(i, j) \in L(x_1) \times L(x_2)} \prod_{k = 1}^n q_{k, (i, j)} \\
    &\geq m^2 \left( \prod_{(i, j) \in L(x_1) \times L(x_2)} \prod_{k = 1}^n q_{k, (i, j)} \right)^{1/m^2} \\
    &= m^2 \left( \prod_{k = 1}^n \prod_{(i, j) \in L(x_1) \times L(x_2)} q_{k, (i, j)} \right)^{1/m^2}.
\end{align*}

Consider some $k \in [n]$ and $(i, j) \in L(x_1) \times L(x_2)$. Let $u_k = a_k + b_k$ and $w_k = a_k + c_k$. Clearly, $q_{k, (i, j)} \in \{m - 2, m - 1, m\}$. By Lemma~\ref{lemma:counting}, 
\begin{align*}
    \lvert \{(i, j) \in L(x_1) \times L(x_2) : q_{k, (i, j)} = m - 2\} \rvert &= a_k^2-a_k+a_kc_k+a_kb_k+b_kc_k \\
    &= u_kw_k - a_k, \\
    \lvert \{(i, j) \in L(x_1) \times L(x_2) : q_{k, (i, j)} = m - 1\} \rvert &= a_k+2a_k(d-a_k) + (a_kc+c_kd-2a_kc_k) + \\
    &\qquad (a_kb+b_kd-2a_kb_k) + (b_kc+c_kb-2b_kc_k) \\
    &= a_k + u_k(m - w_k) + w_k(m - u_k),\text{ and} \\
    \lvert \{(i, j) \in L(x_1) \times L(x_2) : q_{k, (i, j)} = m \} \rvert &= (d-a_k) + (d-a_k)(d-a_k-1) \\ 
    &\qquad + (d-a_k)(c-c_k) + (d-a_k)(b-b_k) \\
    &\qquad + (b-b_k)(c-c_k) \\
    &= (m - u_k)(m - w_k).
\end{align*}

Therefore,
\begin{align*}
    P(G, L)
    &\geq m^2 \left( \prod_{k = 1}^n \prod_{(i, j) \in L(x_1) \times L(x_2)} q_{k, (i, j)} \right)^{1 / m^2} \\
    &= m^2 \left( \prod_{k = 1}^n (m - 2)^{u_kw_k - a_k} (m - 1)^{u_k(m - w_k) + w_k(m - u_k) + a_k} m^{(m - u_k)(m - w_k)} \right)^{1 / m^2}.
\end{align*}

For each $k \in [n]$,
\begin{align*}
    (m - 2)&^{u_kw_k - a_k}(m - 1)^{u_k(m - w_k) + w_k(m - u_k) + a_k}m^{(m - u_k)(m - w_k)} \\
    &= (m - 2)^{u_kw_k}(m - 1)^{u_k(m - w_k) + w_k(m - u_k)}m^{(m - u_k)(m - w_k)} \left( \frac{m - 1}{m - 2} \right)^{a_k} \\
    &= (m - 2)^{u_kw_k}(m - 1)^{u_k(m - w_k)}(m - 1)^{mw_k}(m - 1)^{-u_kw_k} \\
    & \cdot m^{m^2}m^{-mw_k}m^{-u_k(m - w_k)} \left( \frac{m - 1}{m - 2} \right)^{a_k} \\
    &= m^{m^2} \left( \frac{m - 2}{m - 1} \right)^{u_kw_k} \left( \frac{m - 1}{m} \right)^{u_k(m - w_k)} \left( \frac{m - 1}{m} \right)^{mw_k} \left( \frac{m - 1}{m - 2} \right)^{a_k} \\
    &= m^{m^2} \left( \frac{m(m - 2)}{(m - 1)^2} \right)^{u_kw_k} \left( \frac{m - 1}{m} \right)^{m(u_k + w_k)} \left( \frac{m - 1}{m - 2} \right)^{a_k} \\
    &= m^{m^2} \left( \frac{m(m - 2)}{(m - 1)^2} \right)^{(m - c_k)(m - b_k)} \left( \frac{m - 1}{m} \right)^{m(m + a_k)} \left( \frac{m - 1}{m - 2} \right)^{a_k}.
\end{align*}
Now, note that $d = \vert A\vert \geq \vert L(y_k)\cap A\vert = a_k$. Also, by the AM-GM inequality, $(m - c_k)(m - b_k) \leq ((2m - c_k - b_k)/2)^2 = ((m+d)/2)^2$. Thus, in conjunction with the fact that $(m(m-2)/(m-1)^2) < 1$ and $(m-1)/m < 1$, we obtain the following lower bound on $(m - 2)^{u_kw_k - a_k}(m - 1)^{u_k(m - w_k) + w_k(m - u_k) + a_k}m^{(m - u_k)(m - w_k)}$:
\begin{align*}
    &m^{m^2} \left( \frac{m(m - 2)}{(m - 1)^2} \right)^{((m + d) / 2)^2} \left( \frac{m - 1}{m} \right)^{m(m + d)} \\
    &= m^{((m + d) / 2)^2 - md} (m - 1)^{m(m + d) - (m + d)^2 / 2} (m - 2)^{((m + d) / 2)^2} \\
    &= m^{((m - d) / 2)^2} (m - 1)^{(m^2 - d^2) / 2} (m - 2)^{((m + d) / 2)^2}.
\end{align*}
Hence,
\begin{align*}
    P(G, L)
    &\geq m^2 \left( \prod_{k = 1}^n (m - 2)^{u_kw_k - a_k} (m - 1)^{u_k(m - w_k) + w_k(m - u_k) + a_k} m^{(m - u_k)(m - w_k)} \right)^{1 / m^2} \\
    &\geq m^2 \left( \prod_{k = 1}^n m^{((m - d) / 2)^2} (m - 1)^{(m^2 - d^2) / 2} (m - 2)^{((m + d) / 2)^2} \right)^{1 / m^2} \\
    &= m^2 \left( m^{n((m - d) / 2)^2} (m - 1)^{n(m^2 - d^2) / 2} (m - 2)^{n((m + d) / 2)^2} \right)^{1 / m^2} \\
    &= m^2 m^{(n / m^2)((m - d) / 2)^2} (m - 1)^{(n / m^2)(m^2 - d^2) / 2} (m - 2)^{(n / m^2) ((m + d) / 2)^2}.
\end{align*}

Recall that $0 \leq d \leq m - 2$. We will now show that the final expression above is smallest when $d = m - 2$. Notice that, for $0 \leq d \leq m - 3$, we have
\begin{align*}
    &\frac{\displaystyle m^2m^{(n / m^2)((m - (d + 1)) / 2)^2} (m - 1)^{(n / m^2)(m^2 - (d + 1)^2) / 2} (m - 2)^{(n / m^2)((m + (d + 1)) / 2)^2}}{\displaystyle m^2 m^{(n / m^2)((m - d) / 2)^2} (m - 1)^{(n / m^2)(m^2 - d^2) / 2} (m - 2)^{(n / m^2)((m + d) / 2)^2}} \\
    &= m^{(n / m^2)(1 / 4 - (m - d) / 2)} (m - 1)^{-(n / m^2)(d + 1 / 2)} (m - 2)^{(n / m^2)(1 / 4 + (m + d) / 2)} \\
    &= \left( m^{1 / 4 - (m - d) / 2} (m - 1)^{-(d + 1 / 2)} (m - 2)^{1 / 4 + (m + d) / 2} \right)^{n / m^2} \\
    &= \left( m^{1 - 2(m - d)} (m - 1)^{-2(2d + 1)} (m - 2)^{1 + 2(m + d)} \right)^{(1 / 4)(n / m^2)} \\
    &= \left( m^{-2m + 2d + 1} (m - 1)^{-2(2d + 1)} (m - 2)^{2m + 2d + 1} \right)^{(1 / 4)(n / m^2)} \\
    &= \left( \left( \frac{m - 2}{m} \right)^{2m} \left( \frac{m(m - 2)}{(m - 1)^2} \right)^{2d + 1} \right)^{(1 / 4)(n / m^2)} \\
    &< 1.
\end{align*}
Therefore,
\begin{align*}
    P(G, L)
    &\geq m^2 m^{(n / m^2)((m - d) / 2)^2} (m - 1)^{(n / m^2)(m^2 - d^2) / 2} (m - 2)^{(n / m^2)((m + d) / 2)^2} \\
    &\geq m^2 m^{(n / m^2)((m - (m - 2)) / 2)^2} (m - 1)^{(n / m^2)(m^2 - (m - 2)^2) / 2} (m - 2)^{(n / m^2)((m + (m - 2)) / 2)^2} \\
    &= m^2 \left(m (m - 1)^{2(m - 1)} (m - 2)^{(m - 1)^2} \right)^{n / m^2}.
\end{align*}

We will now show that $P(G, L) \geq P(G, m)$. That is,
\begin{align*}
    m^2 \left( m (m - 1)^{2(m - 1)} (m - 2)^{(m - 1)^2} \right)^{n / m^2} \geq m(m - 1)^n + m(m - 1)(m - 2)^n.
\end{align*}
Notice that the above inequality is equivalent to the statement
\begin{align*}
    1
    &\geq \frac{m(m - 1)^n}{m^2 \left( m (m - 1)^{2(m - 1)}(m - 2)^{(m - 1)^2} \right)^{n / m^2}} + \frac{m(m - 1)(m - 2)^n}{m^2 \left( m (m - 1)^{2(m - 1)}(m - 2)^{(m - 1)^2} \right)^{n / m^2}} \\
    &= m^{-1 - n / m^2} (m - 1)^{n \left( 1 - (2 / m^2)(m - 1) \right)} (m - 2)^{-(n / m^2)(m - 1)^2} \\
    &+ m^{-1 - n / m^2} (m - 1)^{1 - 2(n / m^2)(m - 1)} (m - 2)^{n \left( 1 - (1 / m^2)(m - 1)^2 \right)}.
\end{align*}
Notice that the above inequality is satisfied if
\begin{align} \label{1-minus-epsilon}
    m^{-1 - n / m^2} (m - 1)^{n\left( 1 - (2 / m^2)(m - 1) \right)} (m - 2)^{-(n / m^2)(m - 1)^2} \leq 1 - \epsilon
\end{align}
and
\begin{align} \label{epsilon}
    m^{-1 - n / m^2} (m - 1)^{1 - 2(n / m^2)(m - 1)} (m - 2)^{n \left( 1 - (1 / m^2)(m - 1)^2 \right)} \leq \epsilon.
\end{align}
Inequality~\ref{1-minus-epsilon} is true if and only if
\begin{align*}
    n \cdot f_1(m) \leq \ln(1 - \epsilon) + \ln(m),
\end{align*}
and Inequality~\ref{epsilon} is true if and only if
\begin{align*}
    n \cdot f_2(m) \leq \ln(\epsilon) + \ln(m) - \ln(m - 1).
\end{align*}
Along with the fact that $f_2(m) < 0$, these two inequalities imply our desired result.
\end{proof}

From Lemma~\ref{lemma:oneamgm} we can deduce the following corollary which is a bit more practical to use.

\begin{cor}\label{corollary:oneamgmbounds}
Let $m, n \in \mathbb{N}$ with $m \geq 4$ and $n \geq 3$. Suppose $\epsilon \in \mathbb{R}$ satisfies $0 < \epsilon < 1 - 1 / m$. If 
\[ -\frac{m^2}{2}\ln{\left(\frac{m\epsilon}{m-1}\right)}\leq n \leq (m+1/2)\ln{(m(1-\epsilon))},\] 
then $P_{\ell}(K_{2, n}, m) = P(K_{2, n}, m)$.
\end{cor}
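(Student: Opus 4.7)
The plan is to derive Corollary~\ref{corollary:oneamgmbounds} directly from Lemma~\ref{lemma:oneamgm} by checking that the range of $n$ permitted by the corollary sits inside the range permitted by the lemma. Under the hypothesis $0<\epsilon<1-1/m$ we have $\ln(m\epsilon/(m-1))<0$ and $\ln(m(1-\epsilon))>0$; rewriting the lemma's endpoints as $\ln(m\epsilon/(m-1))/f_2(m)$ and $\ln(m(1-\epsilon))/f_1(m)$ and dividing each side of the desired containment by the appropriate signed logarithm factor, the inclusion reduces to the two estimates
\[
f_2(m)\leq -\frac{2}{m^2} \qquad \text{and}\qquad 0<f_1(m)\leq \frac{2}{2m+1}
\]
holding for every integer $m\geq 4$. (In particular this forces $f_2(m)<0$ and $f_1(m)>0$, which also makes both lemma bounds positive and finite, as needed.)

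For the $f_2$ inequality, multiplying through by $m^2$ and collecting like terms recasts it as
\[
g(m):=2(m-1)\ln\!\left(1-\tfrac{1}{m}\right)-(2m-1)\ln\!\left(1-\tfrac{2}{m}\right)\geq 2.
\]
Differentiation yields $g'(m)=\tfrac{1}{m}+2\ln((m-1)/(m-2))-\tfrac{3}{m-2}$, and the elementary bound $\ln(1+x)\leq x$ applied with $x=1/(m-2)$ gives $g'(m)\leq 1/m-1/(m-2)<0$ for every $m\geq 4$; hence $g$ is strictly decreasing on $[4,\infty)$. A routine Taylor expansion of the two logarithms in powers of $1/m$ shows $\lim_{m\to\infty} g(m)=2$, so $g(m)>2$ for every finite $m\geq 4$, establishing the $f_2$ bound.

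The $f_1$ inequality follows the same idea but requires tighter control, because the gap $h(m):=2/(2m+1)-f_1(m)$ admits the expansion $h(m)=11/(12m^3)+O(1/m^4)$, so the margin at $m=4$ is only around $0.01$. I would substitute the two-sided finite-series estimate
\[
\sum_{k=1}^{N}\frac{x^k}{k}\leq -\ln(1-x)\leq \sum_{k=1}^{N}\frac{x^k}{k}+\frac{x^{N+1}}{(N+1)(1-x)}
\]
into the definition of $f_1$, with $x\in\{1/m,2/m\}$ and $N$ chosen large enough (say $N=3$ or $4$) that the favourable $\Theta(1/m^3)$ leading term of $h(m)$ dominates the remainder uniformly on $[4,\infty)$, and verify the resulting polynomial-in-$1/m$ inequality directly, supplemented by a numerical check at the boundary values $m=4$ and possibly $m=5$. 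Positivity $f_1(m)>0$ for $m\geq 4$ falls out of the same expansion because the leading term is $1/m>0$.

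The main obstacle is the tightness of the $f_1$ bound near $m=4$: the crude estimate $\ln(1+x)\leq x$ that suffices for $f_2$ loses too much on the $f_1$ side, so the Taylor truncation must be executed sharply and the boundary case handled separately. Once both inequalities on $f_1$ and $f_2$ are in hand, substituting them back into Lemma~\ref{lemma:oneamgm} yields the corollary.
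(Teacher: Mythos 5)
Your reduction of the corollary to the two inequalities $f_2(m)\leq -2/m^2$ and $0<f_1(m)\leq 2/(2m+1)$ is exactly the paper's, and your treatment of the $f_2$ half --- showing $g(m)=2(m-1)\ln(1-1/m)-(2m-1)\ln(1-2/m)$ is strictly decreasing on $[4,\infty)$ with limit $2$ --- is a complete and correct alternative to the paper's one-line argument, which instead applies $\ln(1+x)>x/(1+x)$ to get $(m-1)\ln\bigl(\tfrac{m-1}{m-2}\bigr)>1$ directly. The $f_1$ half of your writeup is a plan rather than a proof, but the plan is precisely what the paper executes: it bounds $\ln\bigl(1+\tfrac{1}{m-2}\bigr)$ above by $x-x^2/2+x^3/3$ and $\ln(1-1/m)$ above by $-1/m$, reducing the claim to a single degree-six polynomial inequality that is verified at $m=4$ and propagated to all $m\geq 4$ by locating the roots of the derivative difference (a quartic) below $4$. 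So nothing is conceptually missing --- your expansion $2/(2m+1)-f_1(m)=\tfrac{11}{12m^3}+O(1/m^4)$ correctly identifies why a third-order truncation is both necessary and sufficient --- but the final polynomial verification for $f_1$ still needs to be written out to make the argument complete.
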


\begin{proof}
It suffices to show that our lower bound for $n$ is bounded below by the lower bound for $n$ in the statement of Lemma~\ref{lemma:oneamgm}, and that our upper bound for $n$ is bounded above by the upper bound in the statement of Lemma~\ref{lemma:oneamgm}.
 
We start by proving 
\begin{equation}\label{ineq:lb}
-\frac{m^2}{2}\ln{\left(\frac{m\epsilon}{m-1}\right)}\geq \frac{\ln(\epsilon) + \ln(m) - \ln(m - 1)}{f_2(m)}.
\end{equation}
When $x > -1$ and $x\neq 0$, it is well known that $x/(x+1) < \ln{(1+x)} < x$. Using these inequalities, we have that 
\[m^2f_2(m) = \ln\left(1-\frac{2}{m}\right)\left(2m-1\right)-\ln\left(1-\frac{1}{m}\right)\left(2m-2\right)\leq -\frac{2(2m-1)}{m} + 2 < 0\] since $m\geq 4$. Thus, it suffices to show that $-(m^2/2)f_2(m)\geq 1$, since $\ln{(m\epsilon / (m-1))} < 0$. This reduces to showing that 
\[\ln{\left(\left(1-\frac{2}{m}\right)^{1/2-m}\left(1-\frac{1}{m}\right)^{m-1}\right)}\geq 1,\]
which follows from
\begin{align*}
    \ln{\left(\left(1-\frac{2}{m}\right)^{1/2-m}\left(1-\frac{1}{m}\right)^{m-1}\right)} = \ln{\left(\left(\frac{m}{m-2}\right)^{1/2}\left(\frac{m-1}{m-2}\right)^{m-1}\right)} > 1.
\end{align*}
Thus, Inequality~\ref{ineq:lb} holds. Next, we show that
\begin{equation}\label{ineq:ub}
\left(m+\frac{1}{2}\right)\ln{(m(1-\epsilon))} \leq \frac{\ln(1 - \epsilon) + \ln(m)}{f_1(m)}.
\end{equation}
First note that 
\[m^2f_1(m) = (m-1)^2\ln{\left(1 + \frac{1}{m-2}\right)} + \ln{\left(1-\frac{1}{m}\right)} > m-1 - \frac{1}{m-1} > 0\]
since $m\geq 4$. Thus, it suffices to show that $(m+1/2)f_1(m)\leq 1$, since $\ln{(m(1-\epsilon))} > 0$. This reduces to showing that
\begin{equation}\label{upperboundineq}
\left(m+\frac{1}{2}\right)\left(\left(\frac{m-1}{m}\right)^{2}\ln\left(1+\frac{1}{m-2}\right)+\frac{1}{m^{2}}\ln\left(1-\frac{1}{m}\right)\right)\leq 1.
\end{equation}
Recall $\ln{(1+x)} = \sum_{n=1}^{\infty}(-1)^{n+1}x^n/n$ whenever $\vert x\vert < 1$. Rewriting, this is equivalent to $\ln{(1+x)}= (x - x^2/2 + x^3/3) + \sum_{k=2}^{\infty}(-x^{2k}/(2k) + x^{2k+1}/(2k+1)) = (x - x^2/2 + x^3/3) - \sum_{k=2}^{\infty}(x^{2k}/(2k(2k+1))\left(2k(1-x)+1\right))$. Since each term inside of the summand is positive, this implies the bound $\ln{(1+x)} < x-x^2/2+x^3/3$. Using this bound, and the previously used $\ln{(1+x)} < x$, for $\ln{(1+1/(m-2))}$ and $\ln{(1-1/m)}$ in Inequality~\ref{upperboundineq}, respectively, shows that 
\begin{align}\label{ineq:sixthpower}
\left(m+\frac{1}{2}\right)\left(\left( \frac{m-1}{m}\right)^{2}\ln\left(1+\frac{1}{m-2}\right)+\frac{1}{m^{2}}\ln\left(1-\frac{1}{m}\right)\right)
\leq \notag\\
\frac{\left(2m+1\right)\left(m^{5}-7m^{4}+17m^{3}-13m^{2}-5m+8\right)}{2\left(m-2\right)^{3}m^{3}}.
\end{align}
So, it suffices to show that the right hand side of Inequality~\ref{ineq:sixthpower} is less than or equal to $1$. Let $f(x) =  (2x+1)(x^{5}-7x^{4}+17x^{3}-13x^{2}-5x+8)$ and $g(x) = 2(x-2)^{3}x^{3}$. Note that $f(x) - g(x) = -x^{5}+3x^{4}+7x^{3}-23x^{2}+11x+8$, which gives $f(4)-g(4) < 0$. Moreover, $f'(x)-g'(x) = -5x^4 + 12x^3 + 12x^2 - 46x + 11$ has real roots which are all strictly less than $4$, implying $f'(x) - g'(x) < 0$ when $x\geq 4$. Thus, $f(m) < g(m)$; so, the right hand side of Inequality~\ref{ineq:sixthpower} is indeed less than $1$, implying that Inequality~\ref{ineq:ub} holds.
\end{proof}

We need one last technical lemma before proving Theorem~\ref{thm:upperboundlinear}.

\begin{lem}\label{lemma:rolle}
Let $m\in \mathbb{N}$ with $m\geq 4$. Let $f(x) = (m+1/2)\ln{(m(1-x))}$, \\ $g(x) = (-m^2/2)\ln{(mx / (m-1))}$, and $h(x) = f(x) - g(x)$. If 
\[x_s = \left(1 - \frac{1}{m}\right)e^{2(-c_1m+c_2)/m^2}\text{\qquad and \qquad}x_b = 1-\frac{1}{m},\]
where $c_1 = 1.24$ and $c_2 = 2.05$, then $h(x) > 0$ for all $x_s < x < x_b$.
\end{lem}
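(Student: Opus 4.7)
The plan is to exploit strict concavity of $h$ (viewed as a function of $x$ with $m$ held fixed) and reduce the claim to verifying the sign of $h$ at the left endpoint $x_s$.

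First, I would compute
\[h'(x) = -\frac{m+1/2}{1-x} + \frac{m^2}{2x}, \qquad h''(x) = -\frac{m+1/2}{(1-x)^2} - \frac{m^2}{2x^2},\]
and observe $h''(x) < 0$ on $(0,1)$, so $h$ is strictly concave. Setting $h'(x)=0$ yields a unique critical point $x^* = m^2/(m+1)^2$, which is therefore the global maximum, and a short calculation shows $x^* < x_b$ (equivalently $m^2 - m - 1 > 0$, which holds for $m \geq 4$). By Rolle's theorem, $h$ has at most two zeros in $(0,1)$. Direct substitution gives $h(x_b) = 0$, and since $h(0^+) = -\infty$ while $h(x^*) > 0$, the other zero $r$ of $h$ must lie in $(0, x^*)$. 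By strict concavity, if $h(x_s) \geq 0$ then for any $x = (1-t)x_s + tx_b \in (x_s, x_b)$ with $t \in (0,1)$ one has $h(x) > (1-t)h(x_s) + th(x_b) = (1-t)h(x_s) \geq 0$. Hence the lemma reduces to showing $h(x_s) \geq 0$.

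The technical heart is thus proving $h(x_s) \geq 0$. Since $mx_s/(m-1) = e^\beta$ with $\beta = 2(c_2 - c_1 m)/m^2$, one computes $g(x_s) = -(m^2/2)\beta = c_1 m - c_2$ and $m(1-x_s) = m - (m-1)e^\beta$, so the required inequality becomes
\[(m+1/2)\ln\bigl(m - (m-1)e^\beta\bigr) \geq c_1 m - c_2.\]
The positive margin here is tiny (on the order of $10^{-2}$ at $m = 4$), so delicate bounds are needed. My plan is to replace $e^\beta$ by a truncated Taylor polynomial (using, for $\beta < 0$, $e^\beta < 1 + \beta + \beta^2/2$, or a quartic upper bound if the quadratic one proves too weak) and similarly to bound $\exp((c_1 m - c_2)/(m+1/2))$ from above via an expansion about $e^{c_1}$. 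After clearing the logarithm and multiplying through by a common denominator, the inequality becomes polynomial in $m$. One then applies Descartes' rule of signs to the resulting polynomial to bound the number of its positive real roots, verifies the sign at $m = 4$ by direct calculation, and checks the asymptotic behavior as $m \to \infty$, thereby concluding positivity on all of $[4, \infty)$. The main obstacle will be choosing Taylor truncations sharp enough to preserve the sign on the full range $[4, \infty)$ yet polynomial enough for Descartes' rule to yield a useful count; iterating the order of truncation, or splitting the argument into small and large $m$ regimes, may be necessary.

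Combining strict concavity of $h$ with $h(x_b) = 0$ and $h(x_s) \geq 0$ then yields $h(x) > 0$ for every $x \in (x_s, x_b)$.
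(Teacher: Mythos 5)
Your reduction is correct, and at the endpoint you land exactly where the paper does: $g(x_s)=c_1m-c_2$, so the whole lemma hinges on $(m+1/2)\ln\bigl(m-(m-1)e^{\beta}\bigr)\geq c_1m-c_2$ with $\beta=2(c_2-c_1m)/m^2$, which is precisely the paper's inequality $p(m)>0$, attacked by the same means (a quadratic upper Taylor bound on the exponentials, valid for negative arguments, followed by Descartes' rule of signs on the resulting polynomial). Where you genuinely differ is in passing from ``$h(x_s)\geq 0$ and $h(x_b)=0$'' to positivity on the open interval: you compute $h''<0$ on $(0,1)$ and invoke strict concavity (the graph lies strictly above the chord), whereas the paper never looks at $h''$; it notes that $h'$ vanishes only at $(m/(m+1))^2$ and derives a contradiction from two applications of Rolle's theorem to show $h$ has a unique zero in $(0,x_s)$. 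Your concavity argument is cleaner and shorter, and it sidesteps the paper's bookkeeping about where the zeros of $h$ can sit. One concrete caution about the part you left as a plan: with the truncation $e^{t}<1+t+t^2/2$ for $t<0$ applied to both exponentials (as the paper does), the resulting degree-six polynomial has positive roots near $0.75$ and $17.14$ and is \emph{negative} for $m$ between $4$ and roughly $17.1$, so ``verify the sign at $m=4$ and conclude positivity on $[4,\infty)$'' fails as literally stated. The polynomial bound can only be used for $m\geq 18$, and $p(m)>0$ must be checked directly for each integer $m\in\{4,\dots,17\}$; the split into small and large $m$ regimes that you flagged as possibly necessary is in fact mandatory, given how thin the margin is near $m=4$.
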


Before we prove Lemma~\ref{lemma:rolle}, we first make a brief remark on how we chose the constants $c_1, c_2$. We chose $c_1$ such that it is maximal to two decimal places; i.e., whenever $c_1 \geq 1.25$, the statement of the Lemma is invalid regardless of the choice for $c_2$. Maximizing $c_1$ in this way allows us to obtain the best possible asymptotic bound in Theorem~\ref{thm:upperboundlinear}. We chose $c_2$ so that the statement of the Lemma was still valid for small $m$ and $c_1$ maximal.

\begin{proof}
First note that $h(x_b) = f(x_b) = g(x_b) = 0$. Now, we show that $h(x_s) > 0$. Using the fact that $h(x) = (m+1/2)\ln{(m(1-x))} + (m^2/2)\ln{(mx / (m-1))}$, we have that $h(x_s) > 0$ if and only if the function $p:[4, \infty)\rightarrow \mathbb{R}$ given by
\[p(y) = y - (y-1)e^{2(-c_1y+c_2)/y^2} - e^{c_1}e^{-(c_1+2c_2)/(2y+1)}\]
satisfies $p(m) = e^{f(x_s)/(m+1/2)} - e^{g(x_s)/(m+1/2)} > 0$.

Let $d(x) = e^x - (1+x+x^2/2)$. Clearly, $\lim_{x\rightarrow -\infty}d(x) < 0$. Also, since $d'(x) = e^x - (1+x) > 0$ whenever $x\neq 0$, and $d(0)=0$, we have that $d(x) < 0$ for all $x < 0$. Since $-c_1y + c_2 < 0$ and $-(c_1+2c_2)/(2y+1) < 0$ when $y\geq 4$, the fact that $e^x < 1+x+x^2/2$ for all $x < 0$ implies
\begin{align*}
    p(y) &> \frac{1}{y^4(2y+1)^2}\left(A_{1}\left(2y+1\right)^{2}+A_{2}y\left(2y+1\right)^{2}+A_{3}y^{2}\left(2y+1\right)^{2}+\right. \\
    &\qquad \left.A_{4}y^{3}\left(2y+1\right)^{2}+A_{5}y^{4}\left(2y+1\right)^{2}+A_{6}\left(2y+1\right)y^{4}+A_{7}y^{4}\right),
\end{align*}
where $A_1 = 2c_2^2$, $A_2 = -2c_2^2 - 4c_1c_2$, $A_3 = 4c_1c_2 + 2c_2 + 2c_1^2$, $A_4 = -2c_2-2c_1^2-2c_1$, $A_5 = 2c_1+1-e^{c_1}$, $A_6 = e^{c_{1}}(2c_{2}+c_{1})$, and $A_7 = -e^{c_{1}}(2c_{2}^{2}+2c_{1}c_{2}+c_{1}^{2}/2)$. Let $q(y)$ be the function given by the product of $y^4(2y+1)^2$ and this lower bound on $p(y)$.

Note that $q(y)$ is a degree-six polynomial with leading coefficient $4A_5 > 0$.\footnote{$q(y)\approx 0.0976 y^6 - 1.6164 y^5 - 0.0406 y^4 - 14.563 y^3 - 23.317 y^2 + 15.05 y + 8.405$} If we let $\epsilon = 0.1$, $y_1 = 0.75$, and $y_2 = 17.14$, it is easy to computationally verify $q(y_i+\epsilon)q(y_i-\epsilon)<0$ for each $i\in [2]$, implying by the Intermediate Value Theorem that the two intervals $(y_i-\epsilon, y_i+\epsilon)$ for each $i\in [2]$ contains a real root for $q(y)$. Moreover, Descartes' Rule of Signs implies that these intervals contain the only positive real roots for $q(y)$. Therefore, for all $y\geq 18$, we have $y^4(2y+1)^2p(y) > q(y) > 0$, which further implies that $p(y) > 0$. 
It is easy to verify that $p(k) > 0$ for each integer $k = 4,5,\hdots, 17$. Thus, $h(x_s) > 0$.

Finally, because $\lim_{x\rightarrow 0^+}f(x) = (m+1/2)\ln{m}$, and $\lim_{x\rightarrow 0^+}g(x) = \infty$, we have that $\lim_{x\rightarrow 0+}h(x) < 0$, which implies that $h(x)$ has at least one zero in the interval $(0, x_s)$; we now prove that this zero is unique. Note that this would imply $h(x) > 0$ for all $x_s < x < x_b$, since we have already shown $h(x_b) = 0$ and $h(x_s) > 0$. Assume for the sake of contradiction that $0 < x_r < x_t < x_s$ and $h(x_r) = h(x_t) = 0$. Noting that
\[h'(x) = \frac{1}{2}\left(\frac{2m+1}{x-1} + \frac{m^2}{x}\right),\]
we find that when $x\in (0,1)$, $h'(x) = 0$ if and only if $x = (m / (m+1))^2$. So, by applying Rolle's Theorem to $h(x)$ at the endpoints $x_t$ and $x_b$ (we know $x_t < x_s < x_b$), we have that $x_t < (m / (m+1))^2$. But Rolle's Theorem on $h(x)$ with endpoints $x_r$ and $x_t$ further implies that $x_t > (m / (m+1))^2$, which is a contradiction.
\end{proof}

We are now ready to prove Theorem~\ref{thm:upperboundlinear}.

\begin{proof}
Let $m$ be an arbitrary fixed integer satisfying $n\leq 1.24m - 2.05$. Since \\ $m\geq \left\lceil (n+2.05)/1.24\right\rceil\geq 4$, it suffices to show that $P_{\ell}(G,m) = P(G,m)$.

Take $f(x)$, $g(x)$, $x_s$, and $x_b$ as they are defined in the statement of Lemma~\ref{lemma:rolle}. 
Note that $g(x)$ is continuous on the interval $(x_s, x_b)$. Also, $g(x_b) = 0$ and $g(x_s) = 1.24m-2.05$. Therefore, by the Intermediate Value Theorem, there exists $\epsilon$ satisfying $\epsilon \in (x_s, x_b)$ and $g(\epsilon) = n$. By Lemma~\ref{lemma:rolle}, and the fact that $\epsilon \in (x_s, x_b)$, $g(\epsilon) = n < f(\epsilon)$. Thus, Corollary~\ref{corollary:oneamgmbounds} implies $P_{\ell}(G,m) = P(G,m)$.
\end{proof}

Deducing that $K_{2,3}$ is enumeratively chromatic-choosable and $\tau(K_{2,4}) = \tau(K_{2,5}) = 3$ requires one further result.

\begin{pro}\label{prop:K2n2}
Let $G = K_{2,n}$. If $n=3$, then $P_{\ell}(G, 2) = P(G,2)=2$. Otherwise, if $n\geq 4$, then $0 = P_{\ell}(G,2) < P(G,2) = 2$.
\end{pro}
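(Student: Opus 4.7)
The plan is to first evaluate $P(G,2)$ from the formula $P(K_{2,n},m) = m(m-1)^n + m(m-1)(m-2)^n$ given in the introduction: at $m=2$ this yields $P(G,2) = 2\cdot 1^n + 2\cdot 1\cdot 0^n = 2$ for every $n \geq 1$. The proposition therefore reduces to two independent claims, namely $P_{\ell}(G,2) = 0$ when $n \geq 4$, and $P_{\ell}(G,2) = 2$ when $n = 3$. Since $P_{\ell}(G,2) \leq P(G,2) = 2$ holds automatically, the second claim amounts to showing $P(G,L) \geq 2$ for every $2$-assignment $L$.

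For $n \geq 4$, I would give the classical Erd\H{o}s--Rubin--Taylor non-choosability construction: set $L(x_1) = \{1,2\}$, $L(x_2) = \{3,4\}$, and let $L(y_1),L(y_2),L(y_3),L(y_4)$ enumerate the four pairs $\{1,3\},\{1,4\},\{2,3\},\{2,4\}$. For any $(c_1,c_2) \in L(x_1) \times L(x_2)$, the $y_k$ with $L(y_k) = \{c_1,c_2\}$ has no available color, so $P(G,L) = 0$; when $n > 4$, any $2$-lists may be assigned to the remaining $y_j$'s without creating a proper coloring.

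For $n = 3$, I would invoke Lemma~\ref{lemma:unionbadgeneral}, which applies because $\chi_{\ell}(K_{2,3}) = 2$, to reduce to a minimizing $L$ with $L(y_j) \subseteq L(x_1) \cup L(x_2)$, and split on $d = |L(x_1) \cap L(x_2)|$. When $d = 2$, every list equals the same $2$-set $\{a,b\}$, and the only proper $L$-colorings are $(x_1,x_2,y_1,y_2,y_3) = (a,a,b,b,b)$ and $(b,b,a,a,a)$, giving exactly $2$. When $d = 1$, writing $L(x_1) = \{a,b\}$, $L(x_2) = \{a,c\}$ and letting $s,t,u$ denote the number of $y_j$'s with $L(y_j) = \{a,b\},\{a,c\},\{b,c\}$ respectively, direct summation over the four choices $(c_1,c_2) \in L(x_1) \times L(x_2)$ yields a total of $2^u + [t=0] + [s=0] + [u=0]$, which is always at least $2$ subject to $s+t+u=3$.

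The main obstacle is the case $d = 0$, with $L(x_1) = \{a,b\}$ and $L(x_2) = \{c,d\}$ disjoint. For each $(c_1,c_2) \in L(x_1) \times L(x_2)$, set $N(c_1,c_2) = \prod_j |L(y_j) \setminus \{c_1,c_2\}|$; this vanishes iff some $L(y_j) = \{c_1,c_2\}$. Pigeonhole on the three $y_j$'s forces at least one of the four pairs in $L(x_1) \times L(x_2)$ to be unblocked. If at least two pairs are unblocked, each contributes at least $1$, giving $\sum N \geq 2$. If exactly one pair $(c_1^*,c_2^*)$ is unblocked, the three $L(y_j)$'s realize the other three pairs in $L(x_1) \times L(x_2)$; one of these equals the ``opposite'' pair $\{a,b,c,d\} \setminus \{c_1^*,c_2^*\}$ and contributes a factor of $2$ (its two elements are disjoint from $\{c_1^*,c_2^*\}$), while the other two contribute a factor of $1$ each, so $N(c_1^*,c_2^*) = 2$ and the proof concludes.
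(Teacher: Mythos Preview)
Your proof is correct, but for the $n=3$ case it takes a genuinely different route from the paper. The paper does not invoke Lemma~\ref{lemma:unionbadgeneral} or split on $d=|L(x_1)\cap L(x_2)|$ at all. Instead, it first cites $\chi_\ell(K_{2,3})=2$ to get $P(G,L)\geq 1$ for every $2$-assignment, and then rules out $P(G,L')=1$ by a short swap argument: if $c$ is the unique proper $L'$-coloring, then for each vertex the unused list color must be a neighbor's used color (else swapping that single vertex gives a second coloring); consequently the ``complementary'' coloring $c'$ with $c'(v)\neq c(v)$ for all $v$ is also proper, a contradiction. For $n\geq 4$ the paper likewise just cites $\chi_\ell(K_{2,n})=3$ rather than exhibiting an explicit bad assignment as you do. Your approach has the virtue of being fully constructive and of mirroring the case-analysis machinery used elsewhere in the paper for larger $m$; the paper's swap argument is shorter, avoids the structural reduction lemma, and highlights a parity idea that applies uniformly to any $2$-assignment.
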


\begin{proof}
We have $P(G,2) = 2$ for all $n\in \mathbb{N}$. Also, since $\chi_{\ell}(K_{2,n})=3$ whenever $n \geq 4$, it suffices to show that $P_{\ell}(K_{2,3},2)=2$.  For the remainder of the proof, assume $n=3$.

We will prove this by showing that $P(G,L)\geq 2$ over all $2$-assignments $L$ of $G$. Let $G$ have partite sets $X = \{x_1, x_2\}$ and $Y = \{y_1, y_2, y_3\}$. Since $\chi_{\ell}(K_{2,3})=2$, $P(G,L)>0$. 

Now, assume for the sake of contradiction that there exists a $2$-assignment $L'$ such that $P(G,L')=1$. Let $c$ be the only proper $L'$-coloring of $G$, and suppose that $c(x_i) = a_i$ for each $i\in [2]$ and $c(y_j) = b_j$ for each $j\in [3]$. Then, we must have that $L'(x_i) = \{a_i, c_i\}$ and $c_i\in \{b_1, b_2, b_3\}$ for each $i\in [2]$. If not, then there exists $x\in X$ such that coloring $x$ with the element in $L'(x)$ not equal to $c(x)$ yields another proper $L'$-coloring, which contradicts our original assumption. Using a similar argument, we can conclude $L'(y_i) = \{b_i, d_i\}$ and $d_i\in \{a_1, a_2\}$ for each $i\in [3]$. But then, the $L'$-coloring $c'$ satisfying $c'(v)\neq c(v)$ for all $v\in V(G)$ is proper, contradicting $P(G,L')=1$.
\end{proof}

\begin{cor}
We have $\tau(K_{2,3}) = 2$ and $\tau(K_{2,4}) = \tau(K_{2,5}) = 3$. 
\end{cor}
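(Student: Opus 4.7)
The plan is to assemble the three values of $\tau$ directly from Proposition \ref{prop:K2n2}, Theorem \ref{thm: casework}, and Theorem \ref{thm:upperboundlinear}; no genuinely new technical arguments are required, only a careful accounting of which values of $m$ are covered by each result.

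For the lower bounds, I would first invoke the definition: since $K_{2,n}$ is bipartite, $\chi(K_{2,n}) = 2$ and hence $\tau(K_{2,n}) \geq 2$ for every $n$. For $n = 3$ this is already the claimed value. For $n \in \{4,5\}$, Proposition \ref{prop:K2n2} supplies $P_{\ell}(K_{2,n},2) = 0 < 2 = P(K_{2,n},2)$, upgrading the lower bound to $\tau(K_{2,n}) \geq 3$.

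For the matching upper bounds, I would first compute $\lceil (n+2.05)/1.24\rceil$ for each $n \in \{3,4,5\}$, obtaining $5$, $5$, and $6$ respectively. By Theorem \ref{thm:upperboundlinear}, this shows $P_{\ell}(K_{2,n},m) = P(K_{2,n},m)$ for every $m \geq 5$ when $n \in \{3,4\}$ and every $m \geq 6$ when $n = 5$. The finitely many small values of $m$ that remain are then absorbed by Theorem \ref{thm: casework}: parts (i), (ii), (iii) establish $P_{\ell}(K_{2,n},m) = P(K_{2,n},m)$ at $m = 3, 4, 5$ respectively, and their ranges $n \leq 10$, $n \leq 24$, $n \leq 43$ comfortably contain $n \in \{3,4,5\}$. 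For $n = 3$ we additionally need $m = 2$, which is delivered by Proposition \ref{prop:K2n2}, namely $P_{\ell}(K_{2,3},2) = 2 = P(K_{2,3},2)$. Collecting these ranges gives $P_{\ell}(K_{2,3},m) = P(K_{2,3},m)$ for all $m \geq 2$ and $P_{\ell}(K_{2,n},m) = P(K_{2,n},m)$ for all $m \geq 3$ when $n \in \{4,5\}$, yielding $\tau(K_{2,3}) = 2$ and $\tau(K_{2,4}) = \tau(K_{2,5}) = 3$.

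The only possible obstacle is purely bookkeeping: verifying that the small-$m$ windows supplied by Theorem \ref{thm: casework} meet the tail supplied by Theorem \ref{thm:upperboundlinear} without a gap. The arithmetic above confirms that they do, so the corollary follows in a few lines.
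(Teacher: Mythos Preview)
Your proposal is correct and follows essentially the same route as the paper: invoke Theorem~\ref{thm:upperboundlinear} to handle all sufficiently large $m$, then fill in the finitely many remaining small values of $m$ using Theorem~\ref{thm: casework} (and Proposition~\ref{prop:K2n2} for $m=2$), with the lower bounds coming from $\chi$ together with Proposition~\ref{prop:K2n2} (the paper phrases the latter as $\chi_\ell(K_{2,n})=3$ for $n\in\{4,5\}$, which amounts to the same thing). Your bookkeeping is in fact slightly more careful than the paper's: you correctly compute $\lceil (3+2.05)/1.24\rceil = 5$ and then explicitly cover $m=4$ via part~(ii) of Theorem~\ref{thm: casework}, whereas the paper writes $\tau(K_{2,3})\le 4$ and attributes the $m=2$ case to Theorem~\ref{thm: casework} rather than Proposition~\ref{prop:K2n2}.
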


\begin{proof}
By Theorem~\ref{thm:upperboundlinear}, $\tau(K_{2,3})\leq 4$, implying $P_{\ell}(K_{2,3},m) = P(K_{2,3},m)$ for all $m\geq 4$. Theorem~\ref{thm: casework} give that
$P_{\ell}(K_{2,3},m) = P(K_{2,3}, m)$ for each $m\in \{2,3\}$. Since $\chi_{\ell}(K_{2,3}) = 2$, it follows that $\tau(K_{2,3}) = 2$.

Also, Theorem~\ref{thm:upperboundlinear} gives $\tau(K_{2,4})\leq 5$, implying $P_{\ell}(K_{2,4},m) = P(K_{2,4},m)$ for all $m\geq 5$. Theorem~\ref{thm: casework} gives that $P_{\ell}(K_{2,4},m) = P(K_{2,4}, m)$ for each $m\in \{3,4\}$. Since $\chi_{\ell}(K_{2,4}) = 3$, it follows that $\tau(K_{2,4}) = 3$.

Finally, Theorem~\ref{thm:upperboundlinear} also gives $\tau(K_{2,5})\leq 6$, implying $P_{\ell}(K_{2,5},m) = P(K_{2,5},m)$ for all $m\geq 6$. Theorem~\ref{thm: casework} gives that $P_{\ell}(K_{2,5},m) = P(K_{2,5}, m)$ for each $m\in \{3,4,5\}$. Since $\chi_{\ell}(K_{2,5}) = 3$, it follows that $\tau(K_{2,5}) = 3$.
\end{proof}

{\bf Acknowledgment.}  This paper is based on a research project conducted with undergraduate students Akash Kumar, Andrew Liu, Patrick Rewers, Paul Shin, Michael Tanahara, and Khue To at the College of Lake County during the summer 2021, fall 2021, and spring 2022 semesters. The support of the College of Lake County is gratefully acknowledged.  The authors would also like to thank Doug West for helpful conversations.

\appendix
\newpage
\section{Appendix} \label{finish}

In this Appendix we prove Statements~(ii) and~(iii) of Theorem~\ref{thm: casework}.

\begin{proof}
Let $G = K_{2,n}$ with bipartition $\{x_1, x_2\}$, $\{y_1, \hdots, y_n\}$.  Since $\tau(C_4) = 2$ and we wish to begin by proving the first part of Statement~(ii), we will assume that $3 \leq n \leq 24$.  Let and $L$ be a $4$-assignment for $G$ such that $L(y_j)\subseteq L(x_1)\cup L(x_2)$ for all $j\in [n]$ and $P(G,L) = P_{\ell}(G,4)$ (we know such an $L$ exists by Lemma~\ref{lemma:unionbadgeneral}). For each $(i,j)\in L(x_1)\times L(x_2)$ and $k\in [n]$, let $q_{k, (i,j)} = \vert L(y_k) - \{i,j\}\vert$. Let $B = L(x_1) - L(x_2)$, $C = L(x_2) - L(x_1)$, $D = L(x_1)\cap L(x_2)$ and $d = \vert D\vert$. Clearly, $d\in \{0,1,2,3,4\}$. We will show that $P(G,L)\geq P(G,4) = 4\cdot 3^n + 12\cdot 2^n$ for each possible $d$. From this, it follows that $P_{\ell}(G,4) = P(G,4)$.

If $d=4$, then clearly $P(G,L) = P(G,4)$. If $d=3$, then by Lemma~\ref{lemma:dm-1bad} we have $P(G,L)\geq P(G,4)$. If $d=2$, then by Lemma~\ref{lemma:dm-2} we have 
\begin{align*}
    P(G,L) &\geq 2(9^{a_1}9^{a_2}12^{a_3}16^{a_4})^{1/2} + 2(4^{a_4}3^{a_3}2^{a_1+a_2})^{1/2} \\
    &\qquad + 8\cdot 3^{n/2}(16^{a_1}16^{a_2}32^{a_3}81^{a_4})^{1/8} + 4(81^{a_1}72^{a_2}36^{a_3}16^{a_4})^{1/4}
\end{align*}
for all ordered quadruples of nonnegative integers $(a_1, a_2, a_3, a_4)$ satisfying $a_1+a_2+a_3+a_4=n$. It is easy to verify computationally that the expression above is greater than or equal to $P(G,4)$ over all such ordered quadruples, for all $3\leq n\leq 24$. See the code for the first program in Appendix~\ref{code}.

So, suppose $d=1$. Without loss of generality, assume $L(x_1) = \{1,2,3,4\}$ and $L(x_2) = \{1,5,6,7\}$. Then $L(y_k)\in \binom{[7]}{4}$ for each $k\in [n]$. For each $A\in \binom{[7]}{4}$, let $z_{A} = \vert \{k\in [n]: L(y_k) = A\}\vert$. Note that $\sum_{A\in \binom{[7]}{4}}z_A = n$. 

Now, let $\{\mathcal{X}, \mathcal{Y}, \mathcal{Z}, \mathcal{W}\}$ be the partition of $\binom{[7]}{4}$ satisfying 
\begin{align*}
    \mathcal{X} &= \{X: (D \subset X)\} \cap (\{X: \vert X\cap B\vert = 2\}\cup \{X: \vert X\cap B\vert = 0\}), \\
    \mathcal{Y} &= \{Y: (D \subset Y)\}\cap \{Y: \vert Y\cap B\vert = 1\}, \\
    \mathcal{Z} &= \{Z: \vert Z\cap D\vert = 1\}\cap (\{Z: \vert Z\cap B\vert = 2\}\cup \{Z: \vert Z\cap B\vert = 1\}),\text{ and}\\
    \mathcal{W} &= \{W: \vert W\cap D\vert = 0\}.
\end{align*}

Broadly speaking, this partition serves to separate the set of possible lists for each $y_k$ into four distinct list types. Let $a_1 = \sum_{S\in \mathcal{X}}z_S$, $a_2 = \sum_{S\in \mathcal{Y}}z_S$, $a_3 = \sum_{S\in \mathcal{Z}}z_S$, and $a_4 = \sum_{S\in \mathcal{W}}z_S$.

Now, we compute lower bounds for the values of $\vert \mathcal{C}_{(i,j)}\vert$ across all $(i,j)\in L(x_1)\times L(x_2)$. More specifically, for each $(i,j)\in L(x_1)\times L(x_2)$, we analyze the value of $q_{k,(i,j)}$ for each $y_k$ by performing casework on each of the four possible list types as described in the previous paragraph. Note that we necessarily have $q_{k,(i,j)}\in \{2,3,4\}$.

Suppose $(i,j) = (1,1)$. In this case, $\vert L(y_k)\cap \{1\}\vert = 1$ if and only if $L(y_k)\in \mathcal{X}$, or $L(y_k)\in \mathcal{Y}$; in all other cases, $\vert L(y_k)\cap \{1\}\vert = 0$. Thus, $\vert \{y_k: q_{k,(i,j)} = 3\}\vert = a_1+a_2$ and $\vert \{y_k: q_{k,(i,j)} = 4\}\vert = a_3+a_4$. So, we have that
\begin{equation}\label{eq:m=4dm-31}
\vert\mathcal{C}_{(1,1)}\vert = 4^{a_3 + a_4}3^{a_1+a_2}.
\end{equation}

Next, consider all $(i,j)\in H$ where $H = ([1] \times C) \cup (B \times [1])$.  When $L(y_k)\in \mathcal{X}$, then we have $\vert L(y_k)\cap B\vert = 3$ and $\vert L(y_k)\cap C\vert = 0$, or $\vert L(y_k)\cap B\vert = 0$ and $\vert L(y_k)\cap C\vert = 3$. Applying Statements 3 and 4 in Lemma~\ref{lemma:counting} then gives $\vert \{(i,j)\in H: q_{k,(i,j)} = 3\}\vert = 3$ and $\vert \{(i,j)\in H: q_{k, (i,j)} = 2\}\vert = 3$. Then, by repeatedly applying Statements 3 and 4 in Lemma~\ref{lemma:counting} in the same manner for each of the three remaining list types, we get that: if $L(y_k) \in \mathcal{Y}$, then $\vert \{(i,j)\in H: q_{k, (i,j)}=2\}\vert = 3$ and $\vert \{(i,j)\in H: q_{k, (i,j)}=3\}\vert = 3$; and, if $L(y_k) \in \mathcal{Z}$, or $L(y_k)\in \mathcal{W}$, then $\vert \{(i,j)\in H: q_{k, (i,j)}=3\}\vert = 4$ and $\vert \{(i,j)\in H: q_{k, (i,j)}=4\}\vert = 2$. In sum, our casework implies that $\sum_{(i,j)\in H} \vert \{y_k: q_{k,(i,j)} = 2\}\vert = 3a_1+3a_2$, $\sum_{(i,j)\in H} \vert \{y_k: q_{k,(i,j)} = 3\}\vert = 3a_1+3a_2+4a_3+4a_4$, and $\sum_{(i,j)\in H} \vert \{y_k: q_{k,(i,j)} = 4\}\vert = 2a_3+2a_4$. So, by the AM-GM inequality:
\begin{equation}\label{eq:m=4dm-32}
\sum_{(i,j)\in H}\vert \mathcal{C}_{(i,j)}\vert \geq 6(4^{2a_3 + 2a_4}3^{3a_1+3a_2+4a_3+4a_4}2^{3a_1+3a_2})^{1/6}.
\end{equation}

Finally, consider all $(i,j)\in J$ where $J = B \times C$. When $L(y_k)\in \mathcal{X}$, then Statement 5 in Lemma~\ref{lemma:counting} gives $\vert \{(i,j)\in J: q_{k,(i,j)} = 3\}\vert = 9$. By repeatedly applying Statement 5 in the same manner for each of the three remaining list types, we get that: if $L(y_k)\in \mathcal{Y}$, then $\vert \{(i,j)\in J: q_{k,(i,j)} = 4\}\vert = 2$, $\vert \{(i,j)\in J: q_{k,(i,j)} = 3\}\vert = 5$, and $\vert \{(i,j)\in J: q_{k,(i,j)} = 2\}\vert = 2$; if $L(y_k)\in \mathcal{Z}$, then $\vert \{(i,j)\in J: q_{k,(i,j)} = 3\}\vert = 6$, $\vert \{(i,j)\in J: q_{k,(i,j)} = 2\}\vert = 3$; and, if $L(y_k)\in \mathcal{W}$, then $\vert \{(i,j)\in J: q_{k,(i,j)} = 4\}\vert = 1$, $\vert \{(i,j)\in J: q_{k,(i,j)} = 3\}\vert = 4$, and $\vert \{(i,j)\in J: q_{k,(i,j)} = 2\}\vert = 4$. In sum, our casework implies that $\sum_{(i,j)\in J} \vert \{y_k: q_{k, (i,j)} = 2\}\vert = 2a_2+3a_3+4a_4$, $\sum_{(i,j)\in J} \vert \{y_k: q_{k, (i,j)} = 3\}\vert = 9a_1+5a_2+6a_3+4a_4$, and $\sum_{(i,j)\in J} \vert \{y_k: q_{k, (i,j)} = 4\}\vert = 2a_2+a_4$. So, by the AM-GM inequality:

\begin{equation}\label{eq:m=4dm-33}
\sum_{(i,j)\in J}\vert \mathcal{C}_{(i,j)}\vert \geq 9(4^{2a_2+a_4}3^{9a_1+5a_2+6a_3+4a_4}2^{2a_2+3a_3+4a_4})^{1/9}.
\end{equation}

Finally, Equation~\ref{eq:m=4dm-31} and Inequalities~\ref{eq:m=4dm-32} and \ref{eq:m=4dm-33} give us that
\begin{align*}
    P(G,L) &= \sum_{(i,j)\in L(x_1)\times L(x_2)} \vert\mathcal{C}_{(i,j)}\vert \\
    &=  \vert\mathcal{C}_{(1,1)}\vert + \sum_{(i,j)\in H} \vert\mathcal{C}_{(i,j)}\vert + \sum_{(i,j)\in J} \vert\mathcal{C}_{(i,j)}\vert \\
    &\geq 3^{a_1}3^{a_2}4^{a_3}4^{a_4}+6\cdot 6^{a_1/2}6^{a_2/2}36^{a_3/3}36^{a_4/3}+9\cdot 3^{a_1}15552^{a_2/9}18^{a_3/3}5184^{a_4/9} \\
    &\geq 3^{x}4^{n-x}+6\cdot 6^{x/2}36^{(n-x)/3}+9\cdot 15552^{x/9}5184^{(n-x)/9},
\end{align*}
where $x = a_1+a_2$. It is easy to verify computationally that the expression above is at least $P(G,4)$ over all $0\leq x\leq n$, for all $3\leq n\leq 24$. See the code for the second program in Appendix~\ref{code}.

Finally, suppose $d=0$. Without loss of generality, assume $L(x_1) = \{1,2,3,4\}$ and $L(x_2) = \{5,6,7,8\}$. Then $L(y_k)\in \binom{[8]}{4}$ for each $k\in [n]$. For each $A\in \binom{[8]}{4}$, let $z_{A} = \vert \{k\in [n]: L(y_k) = A\}\vert$. Notice that $\sum_{A\in \binom{[8]}{4}}z_A = n$. Now, let $\{\mathcal{X}, \mathcal{Y}, \mathcal{Z}\}$ be the partition of $\binom{[8]}{4}$ satisfying
\begin{align*}
    \mathcal{X} &= \{X: \vert X\cap B\vert = 4\}\cup \{X: \vert X\cap B\vert = 0\}, \\
    \mathcal{Y} &= \{Y: \vert Y\cap B\vert = 3\}\cup \{Y: \vert Y\cap B\vert = 1\}, \text{ and}\\
    \mathcal{Z} &= \{Z: \vert Z\cap B\vert = 2\}.
\end{align*}
As before, this partition serves to separate the set of possible lists for each $y_k$ into three distinct list types. Let $a_1 = \sum_{S\in \mathcal{X}}z_S$, $a_2 = \sum_{S\in \mathcal{Y}}z_S$, and $a_3 = \sum_{S\in \mathcal{Z}}z_S$. 

Now, using the same general strategy that we used in the case when $d=1$, we compute lower bounds for the values of $\vert \mathcal{C}_{(i,j)}\vert$ across all $(i,j)\in L(x_1)\times L(x_2)$. When $L(y_k)\in \mathcal{X}$, then we have $\vert L(y_k)\cap L(x_1)\vert = 4$ or $\vert L(y_k)\cap L(x_1)\vert = 0$. For such $k$, applying all five statements in Lemma~\ref{lemma:counting} gives $\vert \{(i,j)\in L(x_1)\times L(x_2): q_{k, (i,j)} = 3\}\vert = 16$. Applying Lemma~\ref{lemma:counting} in the same manner for each of the two remaining list types, we get that: if $L(y_k)\in \mathcal{Y}$, then $\vert \{(i,j)\in L(x_1)\times L(x_2): q_{k, (i,j)}=4\}\vert = 3$, $\vert \{(i,j)\in L(x_1)\times L(x_2): q_{k, (i,j)} = 3\}\vert = 10$, and $\vert \{(i,j)\in L(x_1)\times L(x_2): q_{k, (i,j)} = 2\}\vert = 3$; and, if $L(y_k)\in \mathcal{Z}$, then $\vert \{(i,j)\in L(x_1)\times L(x_2): q_{k, (i,j)} = 4\}\vert = 4$, $\vert \{(i,j)\in L(x_1)\times L(x_2): q_{k, (i,j)} = 3\}\vert = 8$, and $\vert \{(i,j)\in L(x_1)\times L(x_2): q_{k, (i,j)} = 2\}\vert = 4$. In sum, our casework implies that $\sum_{(i,j)\in L(x_1)\times L(x_2)} \vert \{y_k: q_{k, (i,j)} = 2\}\vert = 3a_2+4a_3$, $\sum_{(i,j)\in L(x_1)\times L(x_2)} \vert \{y_k: q_{k, (i,j)} = 3\}\vert = 10a_2+8a_3+16a_1$, and $\sum_{(i,j)\in L(x_1)\times L(x_2)} \vert \{y_k: q_{k, (i,j)} = 4\}\vert = 3a_2+4a_3$. So, by the AM-GM inequality:
\begin{align*}
    P(G,L) &= \sum_{(i,j)\in L(x_1)\times L(x_2)}\vert \mathcal{C}_{(i,j)}\vert\\
    &\geq 16(2^{3a_2+4a_3}3^{10a_2+8a_3+16a_1}4^{3a_2+4a_3})^{1/16} \\
    &= 16\cdot 3^{n/2}(6561^{a_1}4608^{a_2}4096^{a_3})^{1/16}\\
    &\geq 16\cdot 3^{n/2}4096^{n/16}.
\end{align*}
It is easy to verify computationally that the expression above is greater than or equal to $P(G,4)$ for all $3\leq n\leq 24$. See the code for the second program in Appendix~\ref{code}.
Since we have exhausted all possible values for $d$, the proof of the first part of Statement~(ii) is complete.

Now, we turn our attention to the second part of Statement~(ii).  If $n \geq 32$, Theorem~\ref{theorem:firstthresholdpaper} implies $P_{\ell}(K_{2,n},4) < P(K_{2,n},4)$ since $\lfloor n/4\rfloor\geq 9\ln(16/17)$. On the other hand, if $n \in \{27, 28, 29, 30, 31\}$, it is easy to verify from Lemma~\ref{lemma:balancedextension} that there is a 4-assignment $L$ for $G$ with the property that $P(G,L) < P(G,4)$.

Now, we turn our attention to Statement~(iii).  Let $G = K_{2,n}$ with bipartition $\{x_1, x_2\}$, $\{y_1, \hdots, y_n\}$.  Since $\tau(C_4) = 2$ and we wish to begin by proving the first part of Statement~(iii), we will assume that $3 \leq n \leq 43$.  Let $L$ be a $5$-assignment for $G$ such that $L(y_j)\subseteq L(x_1)\cup L(x_2)$ for all $j\in [n]$ and $P(G,L) = P_{\ell}(G,5)$ (we know such an $L$ exists by Lemma~\ref{lemma:unionbadgeneral}). For each $(i,j)\in L(x_1)\times L(x_2)$ and $k\in [n]$, let $q_{k, (i,j)} = \vert L(y_k) - \{i,j\}\vert$. Let $B = L(x_1) - L(x_2)$, $C = L(x_2) - L(x_1)$, $D = L(x_1)\cap L(x_2)$ and $d = \vert D\vert$. Clearly, $d\in \{0,1,2,3,4,5\}$. We will show that $P(G,L)\geq P(G,5) = 5\cdot 4^n + 20\cdot 3^n$ for each possible $d$. From this, it follows that $P_{\ell}(G,5) = P(G,5)$.

If $d=5$, then clearly $P(G,L) = P(G,5)$. If $d=4$, then by Lemma~\ref{lemma:dm-1bad} we have $P(G,L)\geq P(G,5)$. And, if $d=3$, then by Lemma~\ref{lemma:dm-2} we have 

\begin{align*}
    P(G,L)&\geq 3\cdot 4^{a_1}4^{a_2}80^{a_3/3}100^{a_4/3}+6\cdot 3^{a_1}3^{a_2}48^{a_3/3}80^{a_4/3}\\
    &\qquad +12\cdot 12^{a_1/2}12^{a_2/2}3732480^{a_3/12}48^{a_4/3}+4\cdot 4^{a_1}240^{a_2/4}12^{a_3/2}3^{a_4}
\end{align*}
for all ordered quadruples of nonnegative integers $(a_1, a_2, a_3, a_4)$ satisfying $a_1+a_2+a_3+a_4=n$. It is easy to verify computationally that the expression above is greater than or equal to $P(G,5)$ over all such ordered quadruples, for all $3\leq n\leq 43$. See the code for the first program in Appendix~\ref{code}. 

So, suppose $d=2$. Without loss of generality, assume $L(x_1) = \{1,2,3,4,5\}$ and $L(x_2) = \{1,2,6,7,8\}$. Then $L(y_k)\in \binom{[8]}{5}$ for each $k\in [n]$. For each $A\in \binom{[8]}{5}$, let $z_{A} = \vert \{k\in [n]: L(y_k) = A\}\vert$. Note that $\sum_{A\in \binom{[8]}{5}}z_A = n$. 

Now, let $\{\mathcal{X}, \mathcal{Y}, \mathcal{Z}, \mathcal{W}, \mathcal{V}\}$ be the partition of $\binom{[8]}{5}$ satisfying 
\begin{align*}
    \mathcal{X} &= \{X: (D \subset X)\} \cap (\{X: \vert X\cap B\vert = 3\}\cup \{X: \vert X\cap B\vert = 0\}), \\
    \mathcal{Y} &= \{Y: (D \subset Y)\}\cap (\{Y: \vert Y\cap B\vert = 2\}\cup \{Y: \vert Y\cap B\vert = 1\}), \\
    \mathcal{Z} &= \{Z: \vert Z\cap D\vert = 1\}\cap (\{Z: \vert Z\cap B\vert = 3\}\cup \{Z: \vert Z\cap B\vert = 1\}),\\
    \mathcal{W} &= \{W: \vert W\cap D\vert = 1\} \cap \{W: \vert W\cap B\vert = 2\},\text{ and} \\
    \mathcal{V} &= \{V: \vert V\cap D\vert = 0\} \cap (\{V: \vert V\cap B\vert = 3\}\cup \{V: \vert V\cap B\vert = 2\}).
\end{align*}
Broadly speaking, this partition serves to separate the set of possible lists for each $y_k$ into five distinct list types. Let $a_1 = \sum_{S\in \mathcal{X}}z_S$, $a_2 = \sum_{S\in \mathcal{Y}}z_S$, $a_3 = \sum_{S\in \mathcal{Z}}z_S$, $a_4 = \sum_{S\in \mathcal{W}}z_S$, and $a_5 = \sum_{S\in \mathcal{V}}z_S$.

Now, we compute lower bounds for the values of $\vert \mathcal{C}_{(i,j)}\vert$ across all $(i,j)\in L(x_1)\times L(x_2)$. More specifically, for each $(i,j)\in L(x_1)\times L(x_2)$, we analyze the value of $q_{k, (i,j)}$ for each $y_k$ by performing casework on each of the five possible list types as described in the previous paragraph. Note that we necessarily have $q_{k, (i,j)} \in \{3,4,5\}$.

Consider when $(i,j)\in E$ where $E = \{(1,1),(2,2) \}$. When $L(y_k) \in \mathcal{X}$, then we have $\vert L(y_k)\cap B\vert = 2$ and $\vert L(y_k)\cap C\vert = 0$. Applying Statement 1 in Lemma~\ref{lemma:counting}, we find that $\vert \{(i,j)\in E: q_{k, (i,j)} = 4\}\vert = 2$. By repeatedly applying Statement 1 in Lemma~\ref{lemma:counting} in the same way for each of the four remaining list types, we get that: if $L(y_k)\in \mathcal{Y}$, then $\vert \{(i,j)\in E: q_{k, (i,j)} = 4\}\vert = 2$; if $L(y_k)\in \mathcal{Z}$ or $L(y_k)\in \mathcal{W}$, then $\vert \{(i,j)\in E: q_{k, (i,j)} = 4\}\vert = 1$ and $\vert \{(i,j)\in E: q_{k, (i,j)} = 5\}\vert = 1$; and, if $L(y_k)\in \mathcal{V}$, then $\vert \{(i,j)\in E: q_{k, (i,j)} = 5\}\vert = 2$. In sum, our casework implies that $\sum_{(i,j)\in E} \vert \{y_k: q_{k, (i,j)} = 4\}\vert = 2a_1+2a_2+a_3+a_4$, and $\sum_{(i,j)\in E} \vert \{y_k: q_{k, (i,j)} = 5\}\vert = a_3+a_4+2a_5$. So, by the AM-GM inequality:
\begin{equation}\label{eq:m=5dm-3amgm1}
\sum_{(i,j)\in E}\vert\mathcal{C}_{(i,j)}\vert \geq 2\cdot 4^{a_1}4^{a_2}20^{a_3/2}20^{a_4/2}5^{a_5}.
\end{equation}

Let $F = \{(a,b): a,b\in [2], a \neq b\}$, $H = ([2] \times C) \cup (B \times [2])$, and $J = B \times C$. By considering each of the three cases when $(i,j)\in F,H,J$ in the same manner as the previous paragraph (using all five statements in Lemma~\ref{lemma:counting}) we obtain the following inequalities by the AM-GM inequality : 
\begin{align*}
    \sum_{(i,j)\in F}\vert\mathcal{C}_{(i,j)}\vert &\geq 2\cdot 4^{a_1}4^{a_2}20^{a_3/2}20^{a_4/2}5^{a_5}, \\
    \sum_{(i,j)\in H}\vert\mathcal{C}_{(i,j)}\vert &\geq 12\cdot 12^{a_1/2}12^{a_2/2}2880^{a_3/6}2880^{a_4/6}5120^{a_5/6},\text{ and} \\
    \sum_{(i,j)\in J}\vert\mathcal{C}_{(i,j)}\vert &\geq 9\cdot 4^{a_1}230400^{a_2/9}48^{a_3/3}103680^{a_4/9}36^{a_5/3}.
\end{align*}
Then, we have 
\begin{align*}
    P(G,L) &= \sum_{(i,j)\in L(x_1)\times L(x_2)} \vert\mathcal{C}_{(i,j)}\vert \\
    &= \sum_{(i,j)\in E} \vert\mathcal{C}_{(i,j)}\vert + \sum_{(i,j)\in F} \vert\mathcal{C}_{(i,j)}\vert + \sum_{(i,j)\in H} \vert\mathcal{C}_{(i,j)}\vert + \sum_{(i,j)\in J} \vert\mathcal{C}_{(i,j)}\vert \\
    &\geq 4\cdot 4^{a_1}4^{a_2}20^{a_3/2}20^{a_4/2}5^{a_5} + 12\cdot 12^{a_1/2}12^{a_2/2}2880^{a_3/6}2880^{a_4/6}5120^{a_5/6}\\
    &\qquad + 9\cdot 4^{a_1}230400^{a_2/9}48^{a_3/3}103680^{a_4/9}36^{a_5/3}.
\end{align*}
It is easy to verify computationally that the expression above is greater than or equal to $P(G,5)$ over all ordered quintuples of nonnegative integers $(a_1, a_2, a_3, a_4, a_5)$ satisfying $a_1+a_2+a_3+a_4+a_5=n$ for all $3\leq n\leq 43$. See the code for the third program in Appendix~\ref{code}.

So, suppose $d=1$. Without loss of generality, assume $L(x_1) = \{1,2,3,4,5\}$ and $L(x_2) = \{1,6,7,8,9\}$. Then $L(y_k)\in \binom{[9]}{5}$ for each $k\in [n]$. For each $A\in \binom{[9]}{5}$, let $z_{A} = \vert \{k\in [n]: L(y_k) = A\}\vert$. Note that $\sum_{A\in \binom{[9]}{5}}z_A = n$. 

Now, let $\{\mathcal{X}, \mathcal{Y}, \mathcal{Z}, \mathcal{W}, \mathcal{V}\}$ be the partition of $\binom{[9]}{5}$ satisfying 
\begin{align*}
    \mathcal{X} &= \{X: (1\in X)\} \cap (\{X: \vert X\cap B\vert = 4\}\cup \{X: \vert X\cap B\vert = 0\}), \\
    \mathcal{Y} &= \{Y: (1\in Y)\}\cap (\{Y: \vert Y\cap B\vert = 3\}\cup \{Y: \vert Y\cap B\vert = 1\}), \\
    \mathcal{Z} &= \{Z: (1\in Z)\}\cap\{Z: \vert Z\cap B\vert = 2\}, \\
    \mathcal{W} &= \{W: (1\notin W)\} \cap (\{W: \vert W\cap B\vert = 4\}\cup \{W: \vert W\cap B\vert = 1\}),\text{ and} \\
    \mathcal{V} &= \{V: (1\notin V)\} \cap (\{V: \vert W\cap B\vert = 3\}\cup \{W: \vert W\cap B\vert = 2\}).
\end{align*}
As before, this partition serves to separate the set of possible lists for each $y_k$ into five distinct list types. Let $a_1 = \sum_{S\in \mathcal{X}}z_S$, $a_2 = \sum_{S\in \mathcal{Y}}z_S$, $a_3 = \sum_{S\in \mathcal{Z}}z_S$, $a_4 = \sum_{S\in \mathcal{W}}z_S$, and $a_5 = \sum_{S\in \mathcal{V}}z_S$.

Now, using the same general strategy that we used in the case when $d=2$, we compute lower bounds for the values of $\vert \mathcal{C}_{(i,j)}\vert$ across all $(i,j)\in L(x_1)\times L(x_2)$. Let $H = \{(1,b): b\in C\}\cup (\{(a,1): a\in B\}$ and $J = B \times C$. We can use Lemma~\ref{lemma:counting} and the AM-GM inequality to obtain:
\begin{align*}
    \vert\mathcal{C}_{(1,1)}\vert &= 4^{a_1}4^{a_2}4^{a_3}5^{a_4}5^{a_5}, \\
    \sum_{(i,j)\in H}\vert\mathcal{C}_{(i,j)}\vert &\geq 8\cdot 12^{a_1/2}12^{a_2/2}12^{a_3/2}128000^{a_4/8}128000^{a_5/8},\text{ and} \\
    \sum_{(i,j)\in J}\vert\mathcal{C}_{(i,j)}\vert &\geq 16\cdot 4^{a_1}3538944000^{a_2/16}240^{a_3/4}192^{a_4/4}34560^{a_5/8}.
\end{align*}
Then, we have 
\begin{align*}
    P(G,L) &= \sum_{(i,j)\in L(x_1)\times L(x_2)} \vert\mathcal{C}_{(i,j)}\vert \\
    &=  \vert\mathcal{C}_{(1,1)}\vert + \sum_{(i,j)\in H} \vert\mathcal{C}_{(i,j)}\vert + \sum_{(i,j)\in J} \vert\mathcal{C}_{(i,j)}\vert \\
    &\geq 4^{a_1}4^{a_2}4^{a_3}5^{a_4}5^{a_5} + 8\cdot 12^{a_1/2}12^{a_2/2}12^{a_3/2}128000^{a_4/8}128000^{a_5/8}\\
    &\qquad + 16\cdot 4^{a_1}3538944000^{a_2/16}240^{a_3/4}192^{a_4/4}34560^{a_5/8}.
\end{align*}
It is easy to verify computationally that the expression above is greater than or equal to $P(G,5)$ over all ordered quintuples of nonnegative integers $(a_1, a_2, a_3, a_4, a_5)$ satisfying $a_1+a_2+a_3+a_4+a_5 = n$ for all $3\leq n\leq 43$. See the code for the third program in Appendix~\ref{code}. 

Finally, suppose $d=0$. Without loss of generality, assume $L(x_1) = \{1,2,3,4,5\}$ and $L(x_2) = \{6,7,8,9,10\}$. Then $L(y_k)\in \binom{[10]}{5}$ for each $k\in [n]$. For each $A\in \binom{[10]}{5}$, let $z_{A} = \vert \{k\in [n]: L(y_k) = A\}\vert$. Note that $\sum_{A\in \binom{[10]}{5}}z_A = n$. 

Now, let $\{\mathcal{X}, \mathcal{Y}, \mathcal{Z}\}$ be the partition of $\binom{[10]}{4}$ satisfying 
\begin{align*}
    \mathcal{X} &= \{X: \vert X\cap B\vert = 5\}\cup \{X: \vert X\cap B\vert = 0\}, \\
    \mathcal{Y} &= \{Y: \vert Y\cap B\vert = 4\}\cup \{Y: \vert Y\cap B\vert = 1\}, \text{ and}\\
    \mathcal{Z} &= \{Z: \vert Z\cap B\vert = 3\}\cup\{Z: \vert Z\cap B\vert = 2\}.
\end{align*}
As in the previous two cases, this partition serves to separate the set of possible lists for each $y_k$ into three distinct list types. Let $a_1 = \sum_{S\in \mathcal{X}}z_S$, $a_2 = \sum_{S\in \mathcal{Y}}z_S$, and $a_3 = \sum_{S\in \mathcal{Z}}z_S$.

Now, using Lemma~\ref{lemma:counting} and the AM-GM inequality we can compute a lower bound for the values of $\vert \mathcal{C}_{(i,j)}\vert$ across all $(i,j)\in L(x_1)\times L(x_2)$:
\begin{align*}
    P(G,L) = \sum_{(i,j)\in L(x_1)\times L(x_2)}\vert\mathcal{C}_{(i,j)}\vert &\geq 25\cdot 4^{a_1}(3^44^{17}5^{4})^{a_2/25}(3^64^{13}5^6)^{a_3/25}.
\end{align*}
It is easy to verify computationally that the right hand side of the above inequality is greater than or equal to $P(G,5)$ over all ordered triples of nonnegative integers $(a_1, a_2, a_3)$ satisfying $a_1+a_2+a_3 = n$ for all $3\leq n\leq 43$. See the code for the third program in Appendix~\ref{code}.  Since we have now exhausted all possible values for $d$, the proof of the first part of Statement~(iii) is complete. 

Now, we turn our attention to the second part of Statement~(iii).  If $n \geq 56$, Theorem~\ref{theorem:firstthresholdpaper} implies $P_{\ell}(K_{2,n},5) < P(K_{2,n},5)$ since $\lfloor n/4\rfloor\geq 16\ln(16/17)$. On the other hand, if $n \in [55]-[43]$, it is easy to verify from Lemma~\ref{lemma:balancedextension} that there is a 5-assignment $L$ for $G$ with the property that $P(G,L) < P(G,5)$.
\end{proof}

\section{Appendix} \label{code}

All of the programs in this Appendix are written in Python. Our first program allows us to find the $m$ and $n$ values for which the lower bound in Lemma~\ref{lemma:dm-2} demonstrates that $P_{\ell}(K_{2,n},m) < P(K_{2,n},m)$. 

\begin{verbatim}
import math

n=1
m=3 #choose any desired value of $m\geq 3$

def PGL_1(x, y, z, w, m):
    return (m-2) * (
        ((m-1) ** (x + y + z * (m-3) / (m-2) + w * (m-4) / (m-2))) * 
        (m ** ((z + 2*w) / (m-2)))
    )

def PGL_2(x, y, z, w, m):
    if (m==3):
        return 0
    return (m-2)*(m-3) * (
        ((m-2) ** (x + y + z * (m-4) / (m-2) + w * (m-4)*(m-5) / ((m-2) * (m-3)))) *
        ((m-1) ** (z * 2 / (m-2) + w * 4 * (m-4) / ((m-2) * (m-3)))) *
        (m ** (w * 2 / ((m-2) * (m-3))))
    )

def PGL_3(x, y, z, w, m):
    return 4*(m-2)* (
        ((m-2) ** (x / 2 + y / 2 + z * 3*(m-3) / (4 * (m-2)) + w * (m-4) / (m-2))) *
        ((m-1) ** (x / 2 + y / 2 + z * m / (4 * (m-2)) + w * 2 / (m-2))) *
        (m ** (z / (4 * (m-2))))
    )

def PGL_4(x, y, z, w, m):
    return 4 * (
        ((m-2) ** (y / 4 +z / 2 + w)) *
        ((m-1) ** (x + y / 2 + z / 2)) *
        (m ** (y /4))
    )

# This function is the lower bound for P(G,L) given in the statement of the Lemma.
def PGL(x,y,z,w,m):
    return PGL_1(x,y,z,w,m)+PGL_2(x,y,z,w,m)+PGL_3(x,y,z,w,m)+PGL_4(x,y,z,w,m)

# This function is $P(K_{2,n},m)$.
def PG(n,m):
    return m * ((m-1)**(n)) + m*(m-1) * ((m-2)**(n))

stop = False
chromatic_polynomial = 0
# This loop runs for all $m\geq 4$. 
if (m!=3):
    while (n >= 1):
        chromatic_polynomial = PG(n,m)
        for x in range(n+1):
            for y in range(n+1-x):
                for z in range(n+1-x-y):
                    w = n-x-y-z
                    if (chromatic_polynomial > PGL(x,y,z,w,m)):
                        stop = True
        if (not stop):
            print("n = " + str(n) + " is good")
            n += 1
        else:
            print("n = " + str(n) + " is the first bad n")
            break
# This loop runs when m=3. It takes into account the fact that $w$ is 
  necessarily equal to $0$ when $m=3$.
if (m==3):
    while (n >= 1):
        chromatic_polynomial = PG(n,m)
        for x in range(n+1):
            for y in range(n+1-x):
                z = n-x-y
                if (chromatic_polynomial > PGL(x,y,z,0,m)):
                    stop = True
        if (not stop):
            print("n = " + str(n) + " is good")
            n += 1
        else:
            print("n = " + str(n) + " is the first bad n")
            break
\end{verbatim}

Our second program is needed for the proof of Statement~(ii) of Theorem~\ref{thm: casework}.

\begin{verbatim}
import math

n=1
m=4

# This function is the lower bound for $P(G,L)$ when $d=1$.
def PGL_d1(x):
    return ((3**(x) * 4**(n-x)) + 
    6*(6**(x/2) * 36**((n-x)/3)) + 
    9*(15552**(x/9) * 5184**((n-x)/9)))

# This function is the lower bound for $P(G,L)$ when $d=0$.
def PGL_d0():
    return 16*(3**(n/2)*4096**(n/16))

# This function is $P(K_{2,n},m)$.
def PG(n,m):
    return m * ((m-1)**(n)) + m*(m-1) * ((m-2)**(n))

stop = False
chromatic_polynomial = 0
while (n >= 1 and n <= 24):
    chromatic_polynomial = PG(n,m)
    for x in range(n+1):
        if (chromatic_polynomial > PGL_d1(x) 
        or chromatic_polynomial > PGL_d0()):
            stop = True
    if (not stop):
        print("n = " + str(n) + " is good")
        n += 1
    else:
        break
\end{verbatim}

Our final program is needed for the proof of Statement~(iii) of Theorem~\ref{thm: casework}.

\begin{verbatim}
import math

n=1
m=5 

# This function is the lower bound for $P(G,L)$ when $d=2$.
def PGL_d2(x, y, z, w, v):
    return (4*(4**(x) * 4**(y) * 20**(z/2) * 20**(w/2) * 5**(v)) + 
    12*(12**(x/2) * 12**(y/2) * 2880**(z/6) * 2880**(w/6) * 5120**(v/6)) + 
    9*(4**(x) * 230400**(y/9) * 48**(z/3) * 103680**(w/9) * 36**(v/3)))

# This function is the lower bound for $P(G,L)$ when $d=1$.
def PGL_d1(x, y, z, w, v):
    return ((4**(x) * 4**(y) * 4**(z) * 5**(w) * 5**(v)) + 
    8*(12**(x/2) * 12**(y/2) * 12**(z/2) * 128000**(w/8) * 128000**(v/8)) + 
    16*(4**(x) * 3538944000**(y/16) * 240**(z/4) * 192**(w/4) * 34560**(v/8)))

# This function is the lower bound for $P(G,L)$ when $d=0$.
def PGL_d0(x, y, z):
    return 25 * (4**(x) * (3**(4/25*y)*4**(17/25*y)*5**(4/25*y)) * 
		(3**(6/25*z)*4**(13/25*z)*5**(6/25*z)))

# This function is $P(K_{2,n},m)$.
def PG(n,m):
    return m * ((m-1)**(n)) + m*(m-1) * ((m-2)**(n))

stop = False
chromatic_polynomial = 0
while (n >= 1 and n <= 43):
    chromatic_polynomial = PG(n,m)
    for x in range(n+1):
        for y in range(n+1-x):
            for z in range(n+1-x-y):
                for w in range(n+1-x-y-z):
                    v = n-x-y-z-w
                    if (chromatic_polynomial > PGL_d2(x,y,z,w,v) 
                    or chromatic_polynomial > PGL_d1(x,y,z,w,v)):
                        stop = True
    for x in range(n+1):
        for y in range(n+1-x):
            z = n-x-y
            if (chromatic_polynomial > PGL_d0(x,y,z)):
                stop = True
    if (not stop):
        print("n = " + str(n) + " is good")
        n += 1
    else:
        break
\end{verbatim}
\end{document}